\newtheorem{theorem}{Theorem}[section]
\def\@biblabel#1{}
\theoremstyle{plain}
\theoremstyle{definition}
\newtheorem{example}{Example}%[section]
\theoremstyle{remark}
\title{The true detection probability versus the subjective detection probability of a uniformly  optimal search plan}
\author{Liang Hong\footnote{Department of Mathematical Sciences,  The University of Texas at Dallas, 800 West Campbell Road, Richardson, TX 75080, USA. Tel.:~972-883-2161. Email address: liang.hong@utdallas.edu.}}
\date{\today}
\begin{document}

\maketitle

\begin{abstract}

This article investigates the difference between the true detection probability and the subjective probability of a uniformly optimal search plan.  Its main contributions are multi-fold.  First, it provides a set of examples to show that, in terms of the true detection probability, the uniformly optimal search plan may or may not be optimal.  Secondly,  it establishes that the true detection probability of the uniformly optimal search plan based on a composite prior can be less than that of the composite uniformly search plan based on different priors.  Next,  it argues that an open problem is unsolvable.  Finally, it shows that the true detection probability of the uniformly optimal search plan converges to one as the search time approaches infinity.

\smallskip

\emph{Keywords and phrases:} Bayesian learning; composite probability maps; optimal search theory; search and rescue; stationary targets.

\end{abstract}

\section{Introduction}

The theory of optimal search originated in the US Navy's Anti-Submarine Warfare Operations Research Group, which sought an efficient method for detecting hostile submarines during World War II (e.g., Koopman, 1946, 1956a, b, c).  Its later development was also mainly driven by the practical needs of maritime search missions (e.g., Stone and Stanshire 1971; Stone 1973, 1975, 1976; Richardson and Stone 1971; Richardson and Discenza 1980; Stone et al.  2014; Vermeulen and Brink 2017; Bourque 2019).  Most recently,  search games have gained significant attention in the literature; see, for instance,  Clarkson et al. (2020),  Lidbetter (2020),  Alpern et al. (2021),  and Lin (2021).

The uniformly optimal search plan is a cornerstone of the optimal search theory.  It has been successfully applied in practice (e.g.,  Richardson and Stone 1971; Richardson et al.  1980; Stone 1992; Stone et al.  2014).  The uniformly optimal search plan is the theoretical backbone of the U.S. Coast Guard's Search and Rescue Optimal Planning System (SAROPS) and its predecessor Computer Assisted Search Planning (CASP); see, for example,  Stone (1975) and Kratzke et al. (2010).  Arkin (1964) first established sufficient conditions for the existence of a uniformly optimal search plan in the Euclidean search space.   His work was generalized by Stone (1973, 1975, 1976).   The uniformly optimal search plan maximizes the subjective detection probability at each moment of search.  Its properties have been widely studied (e.g., Stone 1975; Stone et al. 2016; Hong 2024).  However,  the true detection probability of the uniformly optimal search plan has rarely been examined until recently (e.g., Hong 2025).  

This article aims to investigate the difference between the true detection probability and the subjective detection probability of the uniformly optimal search plan.  Its main contributions are as follows.  First,  it provides a set of examples to demonstrate some interesting facts regarding the relationship between the true detection probability and the subjective detection probability of the uniformly optimal search plan. In particular, it shows that
\begin{enumerate}
\item[(i)]the true detection probability of a uniformly optimal search may always equal its subjective detection probability;
\item[(ii)]the true detection probability of a uniformly optimal search may never equal its subjective detection probability;
\item[(iii)]the true detection probability of a uniformly optimal search plan might always be less than that of another search plan;
\end{enumerate}
These examples provide useful insight to both researchers and practitioners. For example,  (iii) implies that a uniformly optimal search plan does not necessarily maximize the true detection probability, although its definition guarantees it always maximizes the subjective detection probability.  It also implies that the true mean time to detection of a uniformly optimal search plan can be less than that of another search plan.  

Next, this article investigates the challenging situation when there are inconsistent target distributions.  In this case, analysts often generate a composite target distribution and obtain a uniformly optimal search plan based on it (e.g., Richardson and Discenza 1980; Stone 1992; Stone et al. 2014).   Intuitively, there is a reasonable alternative: first, obtain uniformly optimal search plans based on inconsistent target distributions; then,  create a composite search plan.  We will see that
\begin{enumerate}
\item[(iv)]the true detection probability of the uniformly optimal search plan based on a composite target distribution can be less than that of a composite search plan based on different target distributions.
\end{enumerate}

Given the above limitations of the uniformly optimal search plan, it is natural to ask whether we can find a search plan that maximizes the true detection probability at every moment of search.  This is an open problem proposed in Hong (2025).  We will see that this problem is unsolvable.  

Finally, this article establishes a new property of the uniformly optimal search plan: the true detection probability and the subjective detection probability of a uniformly optimal search plan both converge to one.  The practical interpretation of this result is as follows: if the search is expected to be protracted, then the optimal search plan will be approximately optimal even in terms of the true detection probability.

The remainder of the article is organized as follows.  Section~2 sets the stage by reviewing the search problem and the uniformly optimal search plan.  Section~3 proves Statements~(i)--(iii).  Section~4 establishes Statement~(iv). For each of these statements,  we provide both a discrete example and a continuous example.  Section~5 shows that the aforementioned open problem is unsolvable, and Section~6 shows that the limiting true detection probability of the uniformly optimal search plan is one.  Finally,  Section~7 concludes the article with some remarks.

\section{Notation and setup}
Consider the problem of searching for an unknown stationary target where the amount of available effort is limited.  Since the exact location $x$ of the target is unknown, we take a Bayesian approach to quantify the uncertainty of $x$ by specifying a non-degenerate \emph{target distribution} for it, whose (cumulative) distribution function and density function are denoted as $\Pi$ and $\pi$, respectively.  The \emph{possibility area}, denoted by $X$,  is the support of the target distribution.  We assume $X\subseteq \mathbb{R}^n$ for some positive integer $n$.   The \emph{discrete case} and the \emph{continuous case} refer to the cases where $X$ is countable and uncountable,  respectively.  In the discrete case, we always assume that $X$ is a subset of $\{1, 2, \ldots\}$ and the smallest region over which search effort can be placed is represented by a cell; in the continuous case, we assume that the search effort is allocated continuously.

Let $\mathbb{R}_+=[0, \infty)$. An \emph{allocation} on $X$ is a function $f:X\rightarrow \mathbb{R}_+$ in the discrete case.  In the continuous case,  an \emph{allocation} on $X$ is a function $f:X\rightarrow \mathbb{R}_+$ such that $\int_Af(x)dx$ equals the amount of search effort allocated in $A$ where $A\subseteq X$.   Clearly,  $\sum_{x\in X}f(x)$ and $\int_Xf(x)dx$ are the total effort in the discrete case and the continuous case, respectively. 

A \emph{detection function} $d: X\times \mathbb{R}_+\rightarrow [0, 1]$ accounts for the imperfection of the sensor.
In the discrete case, $d(x, y)$ denotes the conditional probability of detecting the target when $y$ amount of effort is allocated to cell $x$ given that the target is in cell $x$; in the continuous case,   $d(x, y)$ is the conditional probability of detecting the target if the effort density equals $y$ at $x$ given that the target is at $x$.  Throughout, we assume the detection function has been either correctly derived from physical laws or reliably estimated from repeated testing.  
Thus,  the detection function is objective and depends on the sensor used.  A detection function $d$ is \emph{regular} if $d(x, 0)=0$ and $\partial d(x, y)/\partial y$ is continuous, positive, and strictly decreasing for all $x\in X$.

Given a target distribution function $\Pi$,  an allocation $f$, and a detection function $d$,  $P[f]$ denotes the \emph{subjective probability of detection}:
\begin{equation}
\label{eq:subjectiveprob}
P[f]=\left\{
		                           \begin{array}{ll}
		                           \sum_{x\in X}d(x, f(x))\pi(x),& \hbox{in the discrete case,} \\
					\int_Xd(x, f(x))\pi(x)dx, & \hbox{in the continuous case.} 
		                          \end{array}
		                         \right.
\end{equation}
In contrast,  $P^{\#} [f]$ denotes the \emph{true/objectives probability of detection}: 
\begin{equation}
\label{eq:trueprob}
P^{\#}[f]=d(x_0, f(x_0)),
\end{equation}
where $x_0$ denotes the true cell that contains the target in the discrete case and the true target location in the continuous case. 

We define a cost function $c: X \times \mathbb{R}_+ \rightarrow \mathbb{R}_+$.  In the discrete case,  $c(x,  y)$ stands for the cost of applying $y$ effort in cell $x$; in the continuous case, $c(x, y)$ symbolizes the cost density of applying effort density $y$ at location $x$.  Hence,  if we let $C[f]$ denote the cost resulting from an allocation $f$,  then
\[
C[f]=\left\{
		                           \begin{array}{ll}
		                           \sum_{x\in X} c(x, f(x)),& \hbox{in the discrete case,} \\
					\int_Xc(x, f(x))dx, & \hbox{in the continuous case.} 
		                          \end{array}
		                         \right.
\]
Throughout, we assume $c(x, y)=y$ for all $x\in X$, that is,  the cost is proportional to allocation.

A \emph{search plan} on $X$ is a function $\varphi: X\times \mathbb{R}_+\rightarrow \mathbb{R}_+$ such that
\begin{enumerate}
\item[(i)]$\varphi(\cdot, t)$ is an allocation on $X$ for all $t\geq 0$;
\item[(ii)]$\varphi(x, \cdot)$ is an increasing function for all $x\in X$.
\end{enumerate}
Let $T$ be the time to find the target using a search plan $\varphi$, and let $\mu(\varphi)$ be the expectation of $T$ with respect to the subjective probability of detection.  Then 
\begin{equation*}
\mu(\varphi)=\int_0^\infty  (1-P[\varphi(\cdot, t)])dt.
\end{equation*}
Similarly,  let $\mu^{\#}$ be the expectation of $T$ with respect to the true probability of detection.  Then
\begin{equation}
\label{eq:truemean}
\mu^{\#} (\varphi)=\int_0^\infty(1-P^{\#}[\varphi(\cdot, t)])dt.
\end{equation}

The \emph{cumulative effort function} $E$ is a non-negative function with domain $\mathbb{R}_+$ such that $E(t)$ denotes the effort available by time $t$.  We assume $E$ is increasing and $E(t)>0$ for all $t>0$.  Given a target distribution function $\Pi$,  the \emph{uniformly optimal search plan for $\Pi$ and $E(t)$}  maximizes the subjective probability of detection at each moment of search, subject to the constraint $C[\varphi^\star(\cdot, t)]\leq E(t)$.  To be precise, let $\Phi(E)$ be the class of search plans $\varphi$ such that 
\begin{equation}
\label{eq:uniformdis1}
\sum_{x\in X}\varphi(x, t)=E(t), \quad \text{for all $t\geq0$},
\end{equation}
for the discrete case,  and 
\begin{equation}
\label{eq:uniformcon1}
\int_X\varphi(x, t)dx=E(t), \quad \text{for all $t\geq0$},
\end{equation}
for the continuous case. A search plan $\varphi^\star\in \Phi(E)$ is said to be \emph{uniformly optimal for $\Pi$ within $\Phi(E)$} if 
\begin{equation}
\label{eq:uniform}
P[\varphi^\star(\cdot, t)]=\max\{P[\varphi(\cdot, t)]\mid \varphi\in \Phi(E)\}\quad \text{for all $t\geq 0$},
\end{equation}
where $P[\varphi^\star]$ is the subjective probability of detection based on $\Pi$.  Example~2.2.9 of Stone (1975) shows that a uniformly optimal search plan does not always exist.  However, we have the following sufficient conditions for the existence of uniformly optimal search plans (e.g., Section~2.4 of Stone 1975).

\begin{theorem}
\label{thm:unifexistence}
\
\begin{enumerate}
\item[(i)]If $\Pi$ is a target distribution function for a discrete possibility area $X$, $d(x, 0)=0$,  and $d(x, \cdot)$ is continuous,  concave, and increasing for each $x\in X$,  there exists a uniformly optimal search plan for $\Pi$ within $\Phi(E)$.   
\item[(ii)]If $\Pi$ is a target distribution function for a continuous possibility area $X$,  $d(x, 0)=0$, and $d(x, \cdot)$ is increasing and right-continuous for each $x\in X$,  there exists a uniformly optimal search plan for $\Pi$  within $\Phi(E)$.   
\end{enumerate}
\end{theorem}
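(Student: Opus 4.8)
The plan is to build the uniformly optimal plan pointwise in the time variable from a single one-parameter family of ``Lagrangian'' allocations indexed by a threshold on marginal detection value, and then to obtain the nesting requirement~(ii) in the definition of a search plan for free from the monotonicity of that family in the threshold. The duality argument that such threshold allocations solve the per-instant optimization is the classical one; the work is in arranging the family so that it realizes \emph{every} budget $E(t)$ and remains monotone in $t$.

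For part~(i), fix $x\in X$ and let $\rho_x(y)$ be the right-hand derivative of the concave increasing map $y\mapsto d(x,y)$; concavity makes $\rho_x$ non-negative and non-increasing, and boundedness of $d$ by $1$ forces $\rho_x(y)\to 0$, so that for each $\lambda>0$ the quantity $f_\lambda(x)=\sup\{y\ge 0:\pi(x)\rho_x(y)\ge\lambda\}$ is finite (with $\sup\emptyset=0$). The first step is optimality: if $K=G(\lambda):=\sum_{x\in X}f_\lambda(x)$ and $g$ is any allocation with $\sum_{x\in X}g(x)=K$, then the supergradient inequality for concave functions gives $\pi(x)[d(x,g(x))-d(x,f_\lambda(x))]\le \lambda\,(g(x)-f_\lambda(x))$ for every $x$ --- using the right derivative at $f_\lambda(x)$ when $g(x)\ge f_\lambda(x)$ and the left derivative when $g(x)<f_\lambda(x)$, together with the defining inequalities for $f_\lambda(x)$ --- and summing yields $P[g]-P[f_\lambda]\le\lambda(K-K)=0$. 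The second step is monotonicity: lowering $\lambda$ only enlarges $\{y:\pi(x)\rho_x(y)\ge\lambda\}$, so $\lambda\mapsto f_\lambda(x)$ is non-increasing for each $x$. The third step is assembly: after modifying $f_\lambda$ by a partial allocation to the (at most countably many) ``tie'' thresholds at which $G$ would otherwise jump, $G$ becomes a continuous non-increasing surjection onto the relevant range of budgets, so one may pick a non-increasing $t\mapsto\lambda(t)$ with $G(\lambda(t))=E(t)$ (allocating any surplus effort, in the degenerate case $\sup_t E(t)>G(0{+})$, among cells of least marginal value, where detection probability has already been driven to its maximum). Setting $\varphi^\star(x,t)=f_{\lambda(t)}(x)$ gives an allocation of total effort $E(t)$ for each $t$, non-decreasing in $t$, hence a search plan, and uniform optimality is exactly the first step applied at each $t$.

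For part~(ii), $d(x,\cdot)$ need not be concave, so the Lagrangian step fails directly; the plan is to replace $d(x,\cdot)$ by its least concave majorant $\widehat d(x,\cdot)$, run the construction above with $\widehat d$ and with $\int_X$ in place of $\sum_{x\in X}$, and then exploit that $\Pi$ is non-atomic. On a linear segment of $\widehat d(x,\cdot)$ the prescribed average effort density is not applied pointwise; instead $X$ is split into two measurable pieces on which the two segment endpoints (where $d=\widehat d$) are applied, in proportions matching the average. Because $\Pi$ has a density, this splitting changes neither $\int_X\varphi(\cdot,t)\,dx$ nor $P[\varphi(\cdot,t)]$, and afterwards $\Pi$-almost every $x$ receives an effort density at which $d(x,\cdot)=\widehat d(x,\cdot)$, so optimality for $\widehat d$ transfers to $d$; right-continuity of $d(x,\cdot)$ is what guarantees the needed touching points and that the relevant suprema are attained.

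The main obstacle is not the concave duality, which is routine, but the two ``gluing'' steps: in the discrete case, choosing the partial allocations on the tie-thresholds so that $G$ is genuinely continuous and $\varphi^\star(x,\cdot)$ stays monotone as $t$ crosses those thresholds; and in the continuous case, carrying out the concavification-plus-splitting rigorously --- in particular selecting the measurable partition of $X$ to depend measurably on $t$ and verifying that the spliced map is still an allocation in the sense of Section~2. Both are treated in detail in Section~2.4 of Stone (1975), so here it suffices to record the structure of the argument and cite that reference.
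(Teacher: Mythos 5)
The paper does not actually prove Theorem~\ref{thm:unifexistence}; it states it as a known result and points to Section~2.4 of Stone (1975) for the argument. Your sketch is a correct outline of exactly that classical Lagrangian/threshold construction (with concavification and non-atomic splitting in the continuous case), and it defers the same technical gluing steps to the same reference, so it is essentially the approach the paper relies on.
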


For a regular detection function,  we have a semi-closed form of the uniformly optimal search plan (e.g.,   Chapter~2 of Stone 1975; Chapter~5 of Washburn 2014).  

\begin{theorem}
\label{thm:unifopt}
If the cost function takes the form $c(x,y)=y$ for all $y\geq 0$ and $x\in X$ and the detection function is regular,  a uniformly optimal search plan $\varphi^\star$ within $\Phi(E)$ can be found for any target distribution function $\Pi$ as follows.  Define
\begin{equation}
q_x (y)=\pi(x)\frac{\partial }{\partial y}d(x, y), \quad x\in X \text{ and $y\geq 0$},
\end{equation}

\begin{equation}
q^{-1}_x(\lambda)=\left\{
		                           \begin{array}{ll}
		                           \text{the inverse function of $q_x(y)$  evaluated at $\lambda$},& \hbox{if $0<\lambda\leq q_x(0)$,} \\
					0, & \hbox{if $\lambda>q_x(0)$,} 
		                          \end{array}
		                         \right.
\end{equation}		                       
and 
\begin{equation}
Q(\lambda)= \left\{
		                           \begin{array}{ll}
		                          \sum_{x\in X} q_x^{-1}(\lambda),  & \hbox{in the discrete case,} \\
					 \int_X q_x^{-1}(\lambda)dx,  & \hbox{in the continuous case.} 
		                          \end{array}
		                         \right.
\end{equation}
Then a uniformly optimal search plan for $\Pi$ within  $\Phi(E)$ is given by $\varphi^\star(x, t)=q_x^{-1}(Q^{-1}(E(t)))$ where $Q^{-1}$ is the inverse function of $Q$. 
\end{theorem}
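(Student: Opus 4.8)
\emph{Proof strategy.} The plan is to fix an arbitrary time $t\ge 0$ and reduce \eqref{eq:uniform} to a single constrained concave maximization in the allocation $f(\cdot)=\varphi(\cdot,t)$: maximize $\sum_{x\in X}\pi(x)\,d(x,f(x))$ (resp.\ $\int_X\pi(x)\,d(x,f(x))\,dx$) subject to $f\ge 0$ and $\sum_{x\in X}f(x)=E(t)$ (resp.\ $\int_X f(x)\,dx=E(t)$). Regularity of $d$ makes $d(x,\cdot)$ concave, so the objective is concave and the feasible set convex; it also makes $q_x(y)=\pi(x)\,\tfrac{\partial}{\partial y}d(x,y)$ continuous, positive, and strictly decreasing in $y$, hence it has a continuous, strictly decreasing inverse on its range, and the extension $q_x^{-1}(\lambda)=0$ for $\lambda\ge q_x(0)$ keeps $q_x^{-1}$ continuous and non-increasing on $(0,\infty)$. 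It follows that $Q$ is continuous and strictly decreasing on $\{\lambda:Q(\lambda)>0\}$, so $Q^{-1}$ is well defined there; since $E(t)>0$ for $t>0$ (and the case $E(t)=0$ gives the zero allocation), I may set $\lambda^\star=\lambda^\star(t)=Q^{-1}(E(t))$ and $\varphi^\star(x,t)=q_x^{-1}(\lambda^\star)$, which by the very choice of $\lambda^\star$ satisfies $\sum_{x\in X}\varphi^\star(x,t)=Q(\lambda^\star)=E(t)$ (resp.\ the integral identity), so $\varphi^\star(\cdot,t)$ is feasible.

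The core step is a verification (sufficiency) argument that avoids invoking abstract Lagrange/KKT machinery. By construction $q_x(\varphi^\star(x,t))=\lambda^\star$ on $\{x:\varphi^\star(x,t)>0\}$, while $q_x(\varphi^\star(x,t))=q_x(0)\le\lambda^\star$ on $\{x:\varphi^\star(x,t)=0\}$. For an arbitrary competitor $f=\varphi(\cdot,t)$ with $\varphi\in\Phi(E)$, concavity of $d(x,\cdot)$ yields the tangent-line bound
\[
d(x,f(x))\ \le\ d(x,\varphi^\star(x,t))+\tfrac{\partial}{\partial y}d(x,\varphi^\star(x,t))\bigl(f(x)-\varphi^\star(x,t)\bigr).
\]
Multiplying by $\pi(x)$ and summing (resp.\ integrating) over $x\in X$, the cross term equals $\sum_{x\in X}q_x(\varphi^\star(x,t))\bigl(f(x)-\varphi^\star(x,t)\bigr)$, which is $\le\lambda^\star\sum_{x\in X}\bigl(f(x)-\varphi^\star(x,t)\bigr)=\lambda^\star\bigl(E(t)-E(t)\bigr)=0$: on $\{\varphi^\star(\cdot,t)>0\}$ the summands equal $\lambda^\star(f(x)-\varphi^\star(x,t))$, and on $\{\varphi^\star(\cdot,t)=0\}$ one has $q_x(0)\le\lambda^\star$ together with $f(x)-\varphi^\star(x,t)=f(x)\ge 0$. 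Hence $P[f]\le P[\varphi^\star(\cdot,t)]$ for every feasible $f$, which is exactly \eqref{eq:uniform}.

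It then remains to confirm that $\varphi^\star$ is a bona fide search plan. Requirement (i) is the feasibility just shown together with $q_x^{-1}\ge 0$. For requirement (ii), $E$ is increasing, $Q^{-1}$ is decreasing (as $Q$ is decreasing), and $q_x^{-1}$ is non-increasing, so $t\mapsto q_x^{-1}\bigl(Q^{-1}(E(t))\bigr)$ is increasing for each $x$; thus $\varphi^\star(x,\cdot)$ is increasing.

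The main obstacle is the continuous case. One must check that $x\mapsto q_x^{-1}(\lambda)$ is measurable (so that $Q(\lambda)=\int_X q_x^{-1}(\lambda)\,dx$ and $P[\varphi^\star(\cdot,t)]$ are defined), that $Q$ is finite, continuous, and strictly decreasing on the relevant range so that $Q^{-1}(E(t))$ exists for every $t\ge 0$ --- this may need a mild integrability assumption on $\pi$ and $\partial d/\partial y$, or can be taken from the standing hypotheses of Stone (1975, Ch.~2) --- and that the pointwise-in-$x$ optimization truly delivers the global maximizer over all allocations. The last point is in fact handled by the tangent-line estimate above, which bounds an arbitrary feasible $f$ directly and never uses existence of an optimizer; so once measurability and integrability are secured, the continuous case follows the same two-step verification. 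The discrete case is cleaner still: measurability is vacuous and the continuity and monotonicity of $Q$ reduce to monotone convergence, leaving only the verification inequality and bookkeeping.
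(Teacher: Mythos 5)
The paper does not prove Theorem~\ref{thm:unifopt} at all --- it is quoted as a known result from Chapter~2 of Stone (1975) and Chapter~5 of Washburn (2014) --- and your tangent-line verification is precisely the classical Lagrangian-sufficiency argument used there (concavity from regularity, the multiplier $\lambda^\star=Q^{-1}(E(t))$ equalizing $q_x$ on the searched set and dominating $q_x(0)$ off it, and monotonicity of $E$, $Q^{-1}$, and $q_x^{-1}$ giving the increasing-in-$t$ property). Your argument is correct, including the acknowledged measurability/integrability caveats in the continuous case; the only point left implicit is that $Q^{-1}(E(t))$ exists because $d\le 1$ forces $\partial d(x,y)/\partial y\to 0$ and hence $Q(\lambda)\to\infty$ as $\lambda\to 0^{+}$, which is immediate.
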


\section{$P[\varphi^\star(\cdot, t)]$ versus $P^{\#}[\varphi^\star(\cdot, t)]$}

\subsection{$P[\varphi^\star(\cdot, t)]$ may equal $P^{\#}[\varphi^\star(\cdot, t)]$ for all $t\geq 0$}

%%%
\subsubsection{Discrete case}

\begin{theorem}
\label{thm:1discrete}
Suppose the possibility area $X$ is discrete, $x_0\in X$,  and the target distribution is uniform on $X$.  If the detection function $d$ is regular and homogeneous, then $P[\varphi^\star(\cdot, t)]=P^{\#}[\varphi^\star(\cdot, t)]$ for all $t\geq 0$ and $\mu(\varphi^\star)=\mu^{\#}(\varphi^\star)$.
\end{theorem}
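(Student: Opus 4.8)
The plan is to unpack the two hypotheses and feed them into the explicit construction of Theorem~\ref{thm:unifopt}. First, since a uniform distribution on a countable set is a genuine probability distribution only when that set is finite, the hypothesis ``the target distribution is uniform on $X$'' forces $X$ to be finite, say $|X|=N$, with $\pi(x)=1/N$ for every $x\in X$. Homogeneity of $d$ means $d(x,y)=b(y)$ for some function $b$ not depending on $x$; regularity of $d$ then transfers verbatim to $b$, so that $b(0)=0$ and $b'$ is continuous, positive, and strictly decreasing, which is precisely what Theorem~\ref{thm:unifopt} requires.

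Next I would compute the ingredients of Theorem~\ref{thm:unifopt}. Because $q_x(y)=\pi(x)\,\frac{\partial}{\partial y}d(x,y)=\tfrac{1}{N}b'(y)$ does not depend on $x$, neither does the inverse $q_x^{-1}(\lambda)$; call it $g(\lambda)$. Hence $Q(\lambda)=\sum_{x\in X}q_x^{-1}(\lambda)=N\,g(\lambda)$, and the plan given by the theorem is $\varphi^\star(x,t)=g(Q^{-1}(E(t)))$, independent of $x$. The defining normalization $\sum_{x\in X}\varphi^\star(x,t)=E(t)$ then pins this down to the equal allocation $\varphi^\star(x,t)=E(t)/N$ for all $x\in X$ and all $t\ge 0$. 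From here the two detection probabilities are immediate: $P[\varphi^\star(\cdot,t)]=\sum_{x\in X}d(x,E(t)/N)\pi(x)=b(E(t)/N)$ while $P^{\#}[\varphi^\star(\cdot,t)]=d(x_0,\varphi^\star(x_0,t))=b(E(t)/N)$, so the two agree for every $t\ge 0$. Substituting this identity into $\mu(\varphi^\star)=\int_0^\infty(1-P[\varphi^\star(\cdot,t)])\,dt$ and $\mu^{\#}(\varphi^\star)=\int_0^\infty(1-P^{\#}[\varphi^\star(\cdot,t)])\,dt$ yields $\mu(\varphi^\star)=\mu^{\#}(\varphi^\star)$.

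The argument is short, and I do not expect a serious obstacle; the two points that need a word of care are (a) the remark that ``uniform on $X$'' forces $X$ finite, so that the formulas make sense, and (b) ensuring that the plan produced by Theorem~\ref{thm:unifopt} is the object the statement refers to. On point (b), strict concavity of $b$ (from regularity) makes the equal-allocation plan the \emph{unique} uniformly optimal plan within $\Phi(E)$, so there is no ambiguity in writing $\varphi^\star$. The only genuinely delicate matter is fixing the precise meaning of \emph{homogeneous} --- namely that the detection function is the same in every cell, $d(x,y)=b(y)$ --- which should be recalled explicitly at the start of the proof.
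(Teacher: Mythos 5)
Your proposal is correct and follows essentially the same route as the paper: apply the explicit construction of Theorem~\ref{thm:unifopt}, observe that homogeneity plus uniformity make $q_x$ (hence $q_x^{-1}$) independent of $x$, conclude that $\varphi^\star$ is the equal allocation $E(t)/|X|$, and then read off $P[\varphi^\star(\cdot,t)]=P^{\#}[\varphi^\star(\cdot,t)]=d(E(t)/|X|)$. Your added remarks (finiteness of $X$ forced by uniformity, and deducing the equal allocation directly from the normalization $\sum_{x}\varphi^\star(x,t)=E(t)$ rather than computing $Q^{-1}$ explicitly) are minor refinements of the same argument.
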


\begin{proof}
Without loss of generality, we assume that $X=\{1, 2, \ldots, m\}$.  By assumption, the target density function is given by 
\[
\pi(x)=\left\{
		                           \begin{array}{ll}
		                           \frac{1}{m},& \hbox{for $x\in X$,} \\
					0,  & \hbox{otherwise.} 
		                          \end{array}
		                         \right.
\]
Since $d$ is homogeneous,  we have $\frac{\partial}{\partial y}d(x,y)=d'(y)$ and 
\[
q_x(y)=\left\{
		                           \begin{array}{ll}
		                           \frac{1}{m} d'(y),& \hbox{if $x\in D$,} \\
					0,  & \hbox{otherwise,} 
		                          \end{array}
		                         \right.
\]
Therefore,
\begin{eqnarray*}
q_x^{-1}(\lambda) &=& \left\{
		                           \begin{array}{ll}
		                          \text{inverse of $d'(y)$ evaluated at $\frac{\lambda}{\pi(x)}$},& \hbox{for $0<\lambda \leq q_x(0)$ and $x\in D$,} \\
					0,  & \hbox{otherwise,} 
		                          \end{array}
		                         \right.\\
		                         &=& \left\{
		                           \begin{array}{ll}
		                          \text{inverse of $d'(y)$ evaluated at $\lambda m$},& \hbox{for $0<\lambda \leq \frac{d'(0)}{m}$ and $x\in D$,} \\
					0,  & \hbox{otherwise,} 
		                          \end{array}
		                         \right.
\end{eqnarray*}
and 
\[
Q(\lambda)=\sum_{x\in X} q_x^{-1}(\lambda)=m q_x^{-1}(\lambda).
\]
Therefore, 
\[
Q^{-1}(K)=\left(q_x^{-1}\right)^{-1}\left(K/m\right)=q_x \left(K/m \right),
\]
where the amount of effort $K\geq 0$.  It follows that
\begin{eqnarray*}
\varphi^\star(x, t)&=&
q_x^{-1}(Q^{-1}(E(t)))\\
&=&\left\{
		                           \begin{array}{ll}
		                           q_x^{-1}\left(q_x \left(E(t)/m \right)\right),& \hbox{for $0<q_x \left(E(t)/m\right) \leq q_x(0)$ and $x\in D$,} \\
					0,  & \hbox{otherwise,} 
		                          \end{array}
		                         \right.\\
&=&\left\{
		                           \begin{array}{ll}
		                         \frac{E(t)}{m}, & \hbox{$x\in D$,} \\
					0,  & \hbox{otherwise,} 
		                          \end{array}
		                         \right.		                         
\end{eqnarray*}
where the last equality follows from the fact that $0<q_x \left(E(t)/V_D\right) \leq q_x(0)$ always holds (because $q_x(y)=\pi(x)d'(y)$ is strictly decreasing in $y$ for each $x\in X$). Thus, 
\begin{eqnarray*}
P[\varphi^\star(\cdot, t)] &=& \sum_{x\in X} d(x, \varphi^\star(x, t))\pi(x)
                                         = \sum_{x \in X} d(\varphi^\star(x, t))\pi(x) =\sum_{x\in X} d(E(t)/m)\pi(x) \\
                                         &= &d(E(t)/m)=d(x_0, \varphi^\star(\cdot, t))=P^{\#}[\varphi^\star(\cdot, t)].
\end{eqnarray*}
Hence,  $\mu(\varphi^\star)=\mu^{\#}(\varphi^\star)$.
\end{proof}

\noindent \textbf{Remark.} Theorem~\ref{thm:1discrete} does not extend to the case where the detection function $d$ is non-homogeneous. To see this, let $m=2$, $x_0=1$, and the detection $d$ be 
\[
d(x, y)=\left\{
		                           \begin{array}{ll}
		                           1-e^{-y},& \hbox{if $x=1$,} \\
					1-e^{-2y},  & \hbox{if $x=2$,}\\
					0, & \hbox{otherwise.}
		                          \end{array}
		                         \right.\\
\]
Then we have 
\[
q_x^{-1}(\lambda)=\left\{
		                           \begin{array}{ll}
		                           -\ln (2\lambda),& \hbox{if $x=1$,} \\
					-\frac{1}{2}\ln (\lambda),  & \hbox{if $x=2$,}\\
					0, & \hbox{otherwise.}
		                          \end{array}
		                         \right.\\
\]
Therefore,  $Q(\lambda)=-\ln 2 -\frac{3}{2}\ln(\lambda)$ and $Q^{-1}(K)=e^{-\frac{2}{3}(K +\ln 2)}$.  This implies
\[
\varphi^{\star}(x, t)=\left\{
		                           \begin{array}{ll}
		                           \frac{2}{3}E(t)-\frac{1}{3}\ln 2,& \hbox{if $x=1$,} \\
					 \frac{1}{3}E(t)+\frac{1}{3}\ln 2,  & \hbox{if $x=2$,}\\
					0, & \hbox{otherwise.}
		                          \end{array}
		                         \right.\\
\]
Then 
\begin{eqnarray*}
P[\varphi^\star(\cdot, t)] &=& \frac{1}{2}\left(1-e^{-\left[\frac{2}{3}E(t)-\frac{1}{3}\ln 2\right]}\right)+\frac{1}{2}\left(1-e^{-2\left[\frac{1}{3}E(t)+\frac{1}{3}\ln 2\right]}\right) \\
&=& 1-2^{-2/3} e^{-\frac{2}{3}E(t)}-2^{-5/3} e^{-\frac{2}{3}E(t)}.
\end{eqnarray*}
But 
\[
P^{\#}[\varphi^\star(\cdot, t)]=1-e^{-\left[\frac{2}{3}E(t)-\frac{1}{3} \ln 2\right]}=1-2^{-2/3}e^{-\frac{2}{3}E(t)}>P[\varphi^\star(\cdot, t)]
\quad \text{for all $\geq 0$}. 
\]
It also follows that $\mu(\varphi^\star)< \mu^{\#}(\varphi^\star)$.

%%%
\begin{example}[Discrete case]
\label{ex:discrete1}
Suppose the possibility area $X=\{1, 2\}$, the true target location is cell $1$, and the search budget at time $t$ is $E(t)$.   Let the target density function be $\pi(1)=\pi(2)=1/2$.  Assume the detection function $d$  is  $d(x, y)=1-e^{-y}, \ y\geq 0$ for all $x\in X$.  Theorem~\ref{thm:unifopt} implies that the uniformly optimal search plan $\varphi^\star$ exists and is given by 
\[
\varphi^\star (1, t)=\varphi^\star (2, t)=\left\{
		                           \begin{array}{ll}
		                            0,  & \hbox{if $t=0$,} \\
					 \frac{E(t)}{2},  & \hbox{if $t>0$.} 
		                          \end{array}
		                         \right.
		                         \]
Thus, the subjective probability of detection equals
\begin{eqnarray*}
P[\varphi^\star(\cdot, t)] &=& \pi(1) d(1,  \varphi^\star (1, E(t)))+\pi(2)d(2, \varphi^\star (2, E(t)))\\
			&=& p \left(1-e^{- E(t)/2} \right)+(1-p)\left( 1-e^{-E(t)/2}\right) \\
			&=& 1-e^{-E(t)/2}, \quad \text{for all $t>0$}.
\end{eqnarray*}

Since the true target location is cell $1$,  the true probability of detection equals
\begin{eqnarray*}
P^{\#}[\varphi^\star(\cdot, t)] (p)&=&  d(1,  \varphi^\star (1, E(t)))\\
				&=& 1-e^{- E(t)/2}\quad \text{for all $t>0$}.			
\end{eqnarray*}
Also, we have $P[\varphi^\star(\cdot, 0)]=P^{\#}[\varphi^\star(\cdot, 0)]=0$,  because the detection probability (true or subjective) of any search plan (uniformly optimal or not) is $0$ for $t=0$.  Therefore,  $P[\varphi^\star(\cdot, t)]=P^{\#}[\varphi^\star(\cdot, t)]$ for all $t\geq 0$.  This also implies $\mu(\varphi^\star)=\mu^{\#}(\varphi^\star)$.
\end{example}

%%%
\subsubsection{Continuous case}

\begin{theorem}
\label{thm:1continuous}
Suppose $X$ is continuous and bounded, $x_0\in X$, and the target distribution is uniform on $X$.  If the detection function $d$ is regular and homogeneous, then $P[\varphi^\star(\cdot, t)]=P^{\#}[\varphi^\star(\cdot, t)]$ for all $t\geq 0$ and $\mu(\varphi^\star)=\mu^{\#}(\varphi^\star)$.
\end{theorem}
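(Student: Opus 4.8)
The plan is to run the argument of Theorem~\ref{thm:1discrete} almost verbatim, with sums replaced by integrals and the cell count $m$ replaced by the volume $V=\int_X dx$ of the bounded possibility area. Because $X$ is bounded and the target distribution is non-degenerate, $0<V<\infty$, so the uniform density is $\pi(x)=1/V$ for $x\in X$ and $0$ otherwise. Since $d$ is homogeneous, $\partial d(x,y)/\partial y=d'(y)$ is the same for every $x$, hence $q_x(y)=d'(y)/V$ does not depend on $x$ either.

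Next I would substitute this into the semi-closed form of Theorem~\ref{thm:unifopt}. As $q_x$ is one and the same function for all $x$, its (generalized) inverse is $q_x^{-1}(\lambda)=(d')^{-1}(\lambda V)$ on $(0,\,d'(0)/V]$ and $0$ for $\lambda>d'(0)/V$, again independent of $x$; therefore $Q(\lambda)=\int_X q_x^{-1}(\lambda)\,dx=V\,q_x^{-1}(\lambda)$ and $Q^{-1}(K)=q_x(K/V)$. Evaluating at $K=E(t)$ yields $\varphi^\star(x,t)=q_x^{-1}\!\big(q_x(E(t)/V)\big)=E(t)/V$ for every $x\in X$ and $t>0$, i.e.\ the uniformly optimal plan spreads the available effort evenly over $X$; exactly as in the discrete proof, the side condition $0<q_x(E(t)/V)\le q_x(0)$ holds automatically because $q_x$ is positive and strictly decreasing. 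At $t=0$ the budget is $E(0)=0$, so regularity ($d(x,0)=0$) makes $\varphi^\star(\cdot,0)\equiv 0$ admissible and both detection probabilities vanish there.

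With the explicit plan in hand, homogeneity gives $P[\varphi^\star(\cdot,t)]=\int_X d(x,E(t)/V)\pi(x)\,dx=\int_X d(E(t)/V)(1/V)\,dx=d(E(t)/V)$, while $x_0\in X$ gives $P^{\#}[\varphi^\star(\cdot,t)]=d(x_0,\varphi^\star(x_0,t))=d(E(t)/V)$; the two agree for all $t>0$ and both equal $0$ at $t=0$, so $P[\varphi^\star(\cdot,t)]=P^{\#}[\varphi^\star(\cdot,t)]$ for all $t\ge 0$. Feeding this identity into~\eqref{eq:truemean} and the analogous formula for $\mu(\varphi^\star)$ then yields $\mu(\varphi^\star)=\mu^{\#}(\varphi^\star)$.

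I expect the only genuine care point to be the passage from sums to integrals when applying Theorem~\ref{thm:unifopt}: one must confirm that $q_x^{-1}(\lambda)$ is (almost everywhere) constant in $x$ and integrable over $X$, and that boundedness of $X$ is precisely what makes $V<\infty$ so that $Q$ is finite-valued and invertible on the relevant range — the role played by $m<\infty$ in the discrete case. Once that is in place, the remainder is a routine transcription of the discrete computation.
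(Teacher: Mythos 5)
Your proposal is correct and follows essentially the same route as the paper's own proof: it substitutes the uniform density $\pi(x)=1/V_X$ into the semi-closed form of Theorem~\ref{thm:unifopt}, uses homogeneity to make $q_x$ independent of $x$, derives $\varphi^\star(x,t)=E(t)/V_X$, and concludes $P[\varphi^\star(\cdot,t)]=d(E(t)/V_X)=P^{\#}[\varphi^\star(\cdot,t)]$. The extra remarks about $t=0$ and the finiteness of $V_X$ are consistent with (and slightly more explicit than) the paper's argument.
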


\begin{proof}
By assumption, the target density function is given by 
\[
\pi(x)=\left\{
		                           \begin{array}{ll}
		                           \frac{1}{V_X},& \hbox{for $x\in X$,} \\
					0,  & \hbox{otherwise,} 
		                          \end{array}
		                         \right.
\]
where $V_X$ is the volume of $X$, i.e., $V_X=\int_X 1dx$.  Since $d$ is homogeneous,  we have $\frac{\partial}{\partial y}d(x,y)=d'(y)$ and 
\[
q_x(y)=\left\{
		                           \begin{array}{ll}
		                           \frac{1}{V_X} d'(y),& \hbox{if $x\in X$,} \\
					0,  & \hbox{otherwise,} 
		                          \end{array}
		                         \right.
\]
Therefore,
\begin{eqnarray*}
q_x^{-1}(\lambda) &=& \left\{
		                           \begin{array}{ll}
		                          \text{inverse of $d'(y)$ evaluated at $\frac{\lambda}{\pi(x)}$},& \hbox{for $0<\lambda \leq q_x(0)$ and $x\in X$,} \\
					0,  & \hbox{otherwise,} 
		                          \end{array}
		                         \right.\\
		                         &=& \left\{
		                           \begin{array}{ll}
		                          \text{inverse of $d'(y)$ evaluated at $\lambda V_X$},& \hbox{for $0<\lambda \leq \frac{d'(0)}{V_X}$ and $x\in X$,} \\
					0,  & \hbox{otherwise,} 
		                          \end{array}
		                         \right.
\end{eqnarray*}
and 
\[
Q(\lambda)=\int_X q_x^{-1}(\lambda)dx=V_X\times q_x^{-1}(\lambda).
\]
Therefore, 
\[
Q^{-1}(K)=\left(q_x^{-1}\right)^{-1}\left(K/V_X\right)=q_x \left(K/V_X\right),\ K\geq 0,
\]
and 
\begin{eqnarray*}
\varphi^\star(x, t)&=&
q_x^{-1}(Q^{-1}(E(t)))\\
&=&\left\{
		                           \begin{array}{ll}
		                           q_x^{-1}\left(q_x \left(E(t)/V_X\right)\right),& \hbox{for $0<q_x \left(E(t)/V_X\right) \leq q_x(0)$ and $x\in X$,} \\
					0,  & \hbox{otherwise,} 
		                          \end{array}
		                         \right.\\
&=&\left\{
		                           \begin{array}{ll}
		                         \frac{E(t)}{V_X}, & \hbox{$x\in X$,} \\
					0,  & \hbox{otherwise,} 
		                          \end{array}
		                         \right.		                         
\end{eqnarray*}
where the last equality follows from the fact that $q_x(y)=\pi(x)d'(y)$ is strictly decreasing in $y$ for each $x\in X$.  Therefore, 
\begin{eqnarray*}
P[\varphi^\star(\cdot, t)] &=& \int_X d(x, \varphi^\star(x, t))\pi(x) dx 
                                         = \int_X d(\varphi^\star(x, t))\pi(x) dx=\int_X d(E(t)/V_X)\pi(x) dx\\
                                         &= &d(E(t)/V_X)=d(x_0, \varphi^\star(\cdot, t))=P^{\#}[\varphi^\star(\cdot, t)].
\end{eqnarray*}
Hence, $\mu(\varphi^\star)=\mu^{\#}(\varphi^\star)$.
\end{proof}

\noindent \textbf{Remark.} This theorem does not extend to the case where $d$ is non-homogeneous.  To see this, let $X=(a, b)$ where  $0<a<x_0<b$ and $\ln(b/a)\neq (b-a)/x_0$,  let $d(x, y)=1-e^{-xy}, x, y>0$,  and let $\Pi$ be the uniform distribution on $(a, b)$.
Then 
\[
q_x(y) = \left\{
		                           \begin{array}{ll}
		                         \frac{x}{b-a}e^{-xy},& \hbox{if $a<x<b$,} \\
					0,  & \hbox{otherwise,} 
		                          \end{array}
		                         \right.
\]
It follows that
\[
q_x^{-1}(\lambda) = \left\{
		                           \begin{array}{ll}
		                         -\frac{1}{x} \ln\left[\frac{\lambda (b-a)} {x}\right],& \hbox{for $0<\lambda \leq 1/(b-a)$ and $x\in (a, b)$,} \\
					0,  & \hbox{otherwise,} 
		                          \end{array}
		                         \right.
\]
and 
\[
\varphi^\star(x, t)
=\left\{
		                           \begin{array}{ll}
		                           -\frac{1}{x} \ln\left[\frac{Q^{-1}(E(t)) (b-a)} {x}\right], & \hbox{for $0<Q^{-1}(E(t)) \leq 1/(b-a)$ and $x\in (a, b)$,} \\
					0,  & \hbox{otherwise.} 
		                          \end{array}
		                         \right.
\]
Therefore, 
\begin{eqnarray*}
P[\varphi(\cdot, t)]=\int_Xd(x, \varphi^\star(x, t))\pi(x)dx=1-(\ln b-\ln a)Q^{-1}(E(t)),
\end{eqnarray*}
and
\begin{eqnarray*}
P^{\#}[\varphi(\cdot, t)]=d(x_0, \varphi^\star(x, t))=1-\frac{(b-a)Q^{-1}(E(t))}{x_0}.
\end{eqnarray*}
It follows that $P[\varphi(\cdot, t)]\neq P^{\#}[\varphi(\cdot, t)]$ if and only if $\ln (b/a)\neq \frac{b-a}{x_0}$.  

%\medskip
%%%

\begin{example}%[Continuous case]
\label{ex:continuous1}
Let $x_0$ be the origin $(0, 0)$ and let $X=B_2(r)\subseteq \mathbb{R}^2$, where $B_2(r)$ denotes the disc centered at $(0, 0)$ with a radius $r>0$.  Suppose  the detection function is $d(x, y)=1-e^{-y}, \ y\geq 0$ for all $x\in X$.  Consider a  target distribution as follows:
\[
\pi(x)=\left\{
		                           \begin{array}{ll}
		                           \frac{1}{\pi r^2},& \hbox{for $x\in B_2(r)$,} \\
					0,  & \hbox{otherwise.} 
		                          \end{array}
		                         \right.
\]
Then $\frac{\partial}{\partial y}d(x,y)=d'(y)=e^{-y}$ and 
\[
q_x(y)=\left\{
		                           \begin{array}{ll}
		                           \frac{e^{-y}}{\pi r^2} ,& \hbox{if $x\in B_2(r)$,} \\
					0,  & \hbox{otherwise,} 
		                          \end{array}
		                         \right.
\]
Therefore,
\begin{eqnarray*}
q_x^{-1}(\lambda) 		            &=& \left\{
		                           \begin{array}{ll}
		                          \text{$-\ln (\lambda \pi r^2)$},& \hbox{for $0<\lambda \leq \frac{1}{\pi r^2}$ and $x\in B_2(r)$,} \\
					0,  & \hbox{otherwise,} 
		                          \end{array}
		                         \right.
\end{eqnarray*}
and 
\[
Q(\lambda)=\int_X q_x^{-1}(\lambda)dx= \left\{
		                           \begin{array}{ll}
		                          \text{$-\pi r^2 \ln (\lambda \pi r^2)$},& \hbox{for $0<\lambda \leq \frac{1}{\pi r^2}$ and $x\in B_2(r)$,} \\
					0,  & \hbox{otherwise.} 
		                          \end{array}
		                         \right.
\]
It follows that
\[
Q^{-1}(K)=\left\{
		                           \begin{array}{ll}
		                           \frac{e^{-\frac{K}{\pi r^2}}}{\pi r^2} ,& \hbox{if $x\in B_2(r)$,} \\
					0,  & \hbox{otherwise,} 
		                          \end{array}
		                         \right.
\]
and 
\begin{eqnarray*}
\varphi^\star(x, t)&=&
q_x^{-1}(Q^{-1}(E(t)))\\
&=&\left\{
		                           \begin{array}{ll}
		                         \frac{E(t)}{\pi r^2}, & \hbox{$x\in B_2(r)$,} \\
					0,  & \hbox{otherwise.} 
		                          \end{array}
		                         \right.		                         
\end{eqnarray*}
That is,  the uniformly optimal search plan distributes available effort uniformly on the support of the target distribution.  The subjective detection probability is 
\begin{eqnarray*}
P[\varphi^\star(\cdot, t)] &=& \int_X d(x, \varphi^\star(x, t)) \pi(x)dx=\int_{B_2(r)} \left(1-e^{-\frac{E(t)}{\pi r^2}}\right)\frac{1}{\pi r^2}dx=1-e^{-\frac{E(t)}{\pi r^2}}, \quad \text{for all $t>0$}.
\end{eqnarray*}
The true detection probability equals
\[
P^{\#}[\varphi^\star(\cdot, t)]=d(x_0, \varphi^\star(x_0, t))=1-e^{-\frac{E(t)}{\pi r^2}}, \quad \text{for all $t>0$}.
\]
Since $P[\varphi^\star(\cdot, 0)]=P^{\#}[\varphi^\star(\cdot, 0)]=0$ always holds,  we have $P[\varphi^\star(\cdot, t)]=P^{\#}[\varphi^\star(\cdot, t)]$ for all $t\geq 0$.  Therefore,  $\mu(\varphi^\star)=\mu^{\#}(\varphi^\star)$.
\end{example}

\subsection{$P[\varphi^\star(\cdot, t)]$ may never equal $P^{\#}[\varphi^\star(\cdot, t)]$ for all $t> 0$}

\medskip

%%%
\begin{example}[Discrete case]
\label{ex:discrete2}
Consider Example~1 in Hong (2025), which slightly generalizes the example on Page~3 of Stone (1975).  Specifically, let $X=\{1, 2\}$, $x_0=1$, and $E(t)>\ln [p/(1-p)]$ for all $t>0$.   We take the target distribution to be $\pi(1)=p$ and $\pi(2)=1-p$, where $1/2<p<1$.  The detection function $d$  is $d(x, y)=1-e^{-y}, \ y\geq 0$ for all $x\in X$.  By Theorem~\ref{thm:unifopt},  the uniformly optimal search plan $\varphi^\star$ exists and is given by
\[\varphi^\star (1, t)=\left\{
		                           \begin{array}{ll}
		                            E(t),  & \hbox{if $0<E(t)\leq  \ln \left(\frac{p}{1-p}\right) $,} \\
					 \frac{1}{2}\left[E(t)+\ln \left(\frac{p}{1-p}\right) \right],  & \hbox{if $E(t)>\ln \left(\frac{p}{1-p}\right) $,} 
		                          \end{array}
		                         \right.
		                         \]
and 
\[\varphi^\star (2, t)=\left\{
		                           \begin{array}{ll}
		                          0,  & \hbox{if $0< E(t) \leq \ln \left(\frac{p}{1-p}\right) $,} \\
					 \frac{1}{2}\left[E(t)-\ln \left(\frac{p}{1-p}\right) \right],  & \hbox{if $E(t)>\ln \left(\frac{p}{1-p}\right)$.} 
		                          \end{array}
		                         \right.
		                         \]
		                         Then the subjective probability of detection is
\begin{eqnarray*}
P[\varphi^\star(\cdot, t)] &=& \pi(1) d(1,  \varphi^\star (1, E(t)))+\pi(2)d(2, \varphi^\star (2, E(t)))\\
			&=& p\left(1-e^{- \frac{1}{2}\left[E(t)+\ln \left(\frac{p}{1-p}\right) \right]}\right)+(1-p)\left(1-e^{- \frac{1}{2}\left[E(t)-\ln \left(\frac{p}{1-p}\right) \right]}\right) \\
			&=& 1-2\sqrt{p(1-p)}e^{-E(t)/2}.
\end{eqnarray*}

Since $x_0=1$ and $E(t)>\ln [p/(1-p)]$ for all $t>0$,  the true probability of detection is 
\begin{eqnarray*}
P^{\#}[\varphi^\star(\cdot, t)]&=&  d(1,  \varphi^\star (1, E(t)))\\
				&=& 1-e^{- \frac{1}{2}\left[E(t)+\ln \left(\frac{p}{1-p}\right) \right]}\\
				&=& 1-e^{-E(t)/2}\sqrt{\frac{1}{p}-1}.				
\end{eqnarray*}
It is straightforward to verify that $P[\varphi^\star(\cdot, t)] \geq P^{\#}[\varphi^\star(\cdot, t)] $ if and only if $p\geq 1/2$, and that the equality holds if and only if $p=1/2$.   Since $p>1/2$, we have $P[\varphi^\star(\cdot, t)]\neq P^{\#}[\varphi^\star(\cdot, t)]$ for all $t>0$.  
\end{example}

%\medskip

%%%
\begin{example}[Continuous case]
\label{ex:continuous2}
Consider the setup in Examples~2.2.1 and 2.2.7 of Stone (1975).  Suppose the search is conducted at a constant speed $v$ using a sensor with sweeping width $W$.  Hence,  $E(t)=Wvt$.  Assume the possibility area is $X=\mathbb{R}^2$ and the true target location $x_0$ is $(0, 0)$.  The target distribution is bivariate normal with the following density function:
\begin{equation}
\pi(x_1, x_2)=\frac{1}{2\pi\sigma^2}e^{-\frac{x_1^2+x_2^2}{2\sigma^2}}, \quad (x_1, x_2)\in X= \mathbb{R}^2,
\end{equation}
where $\sigma>0$, and the detection function is $d(x, y)=1-e^{-y}$ for all $x\in X$ and $y\geq 0$. For convenience, we will use polar coordinates.  Example~2.2.1 of Stone (1975) shows that  the uniformly optimal plan exists and is given by
\begin{eqnarray*}
\varphi^\star((r, \theta), t) &=& \left\{
		                           \begin{array}{ll}
		                           \left(\frac{ E(t)}{\pi \sigma^2}\right)^{1/2}-\frac{r^2}{2\sigma^2},& \hbox{if $r^2\leq 2\sigma^2\left(\frac{E(t)}{\pi \sigma^2}\right)^{1/2}$,} \\
					0, & \hbox{if $r^2> 2\sigma^2\left(\frac{E(t)}{\pi \sigma^2}\right)^{1/2}$,} 
		                          \end{array}
		                         \right.\\
		                         &=& 
		                          \left\{
		                           \begin{array}{ll}
		                          \left[H\sqrt{t}-\frac{r^2}{2\sigma^2}\right],& \hbox{if $0\leq r\leq R_x(t)$,} \\
					0, & \hbox{if $r>R_x(t)$,} 
		                          \end{array}
		                         \right.
\end{eqnarray*}		                        	                         
where $R^2(t)=2\sigma^2H\sqrt{t}$ and $H=\sqrt{Wv/\pi\sigma^2}$.   By Example~2.2.7 of Stone (1975), the subjective probability of detection is 
\begin{equation}
P[\varphi^\star(\cdot, t)]=1-(1+H\sqrt{t})e^{-H\sqrt{t}}, \quad \text{for all $t>0$}.
\end{equation}
Since $x_0=(0,0)$ can be written as $(0, \theta)$ for some angle $\theta$ in polar coordinates,  the true probability of detection  equals 
\begin{eqnarray*}
P^{\#}[\varphi^\star(\cdot, t)] &=& 
		                             1-e^{-H \sqrt{t}}, \quad \text{forall $t>0$}.
\end{eqnarray*}
It follows that $P[\varphi^\star(\cdot, t)]\neq P^{\#}[\varphi^\star(\cdot, t)]$ for all $t> 0$.
\end{example}

\subsection{There might exist a search plan $\varphi\neq \varphi^\star$ such that $P[\varphi^\star(\cdot, t)]>P[\varphi(\cdot, t)]$ but  $P^{\#}[\varphi^\star(\cdot, t)]<P^{\#}[\varphi(\cdot, t)]$ for all $t>0$}

\medskip

%%%
\begin{example}[Discrete case]
\label{ex:discrete3}
Take the same setup as in Example~\ref{ex:discrete2} with $p=2/3$ but assume $E(t)> \ln 4$ for all $t>0$; let $\varphi^\star$ be the corresponding uniformly optimal search plan.  Consider another search plan $\varphi$:
\[\varphi (1, t)=\left\{
		                           \begin{array}{ll}
		                            E(t)/2  & \hbox{if $0<E(t)\leq  \ln 4 $,} \\
					 \frac{1}{2}\left[E(t)+\ln 4 \right],  & \hbox{if $E(t)>\ln 4 $,} 
		                          \end{array}
		                         \right.
		                         \]
and 
\[\varphi(2, t)=\left\{
		                           \begin{array}{ll}
		                          E(t)/2,  & \hbox{if $0< E(t) \leq \ln 4 $,} \\
					 \frac{1}{2}\left[E(t)-\ln 4 \right],  & \hbox{if $E(t)>\ln4$.} 
		                          \end{array}
		                         \right.
		                         \]
Then $\varphi\neq \varphi^\star$ and $\varphi\in \Phi(E)$. By the definition of the uniformly optimal search plan,  we have $P[\varphi^\star(\cdot, t)]>P[\varphi(\cdot, t)]$ for all $t>0$.  In fact,  it is straightforward to verify that
\[
P[\varphi^\star(\cdot, t)] = 1-\frac{2\sqrt{2}}{3}e^{-E(t)/2}>1-e^{-E(t)/2}=P[\varphi(\cdot, t)].
\]
However, 
\begin{eqnarray*}
P^{\#}[\varphi^\star(\cdot, t)] &=& d(1, \varphi^\star(1, E(t)))=1-\frac{\sqrt{2}}{2}e^{-E(t)/2}\\
                                        & <& 1-\frac{1}{2}e^{-E(t)/2}=d(1, \varphi(1, E(t)))
                                        = P^{\#}[\varphi(\cdot, t)].
\end{eqnarray*}
for all $t> 0$.

\begin{figure}[h]
\begin{center}
\subfigure[Subjective detection probabilities]{\scalebox{0.45}{\includegraphics{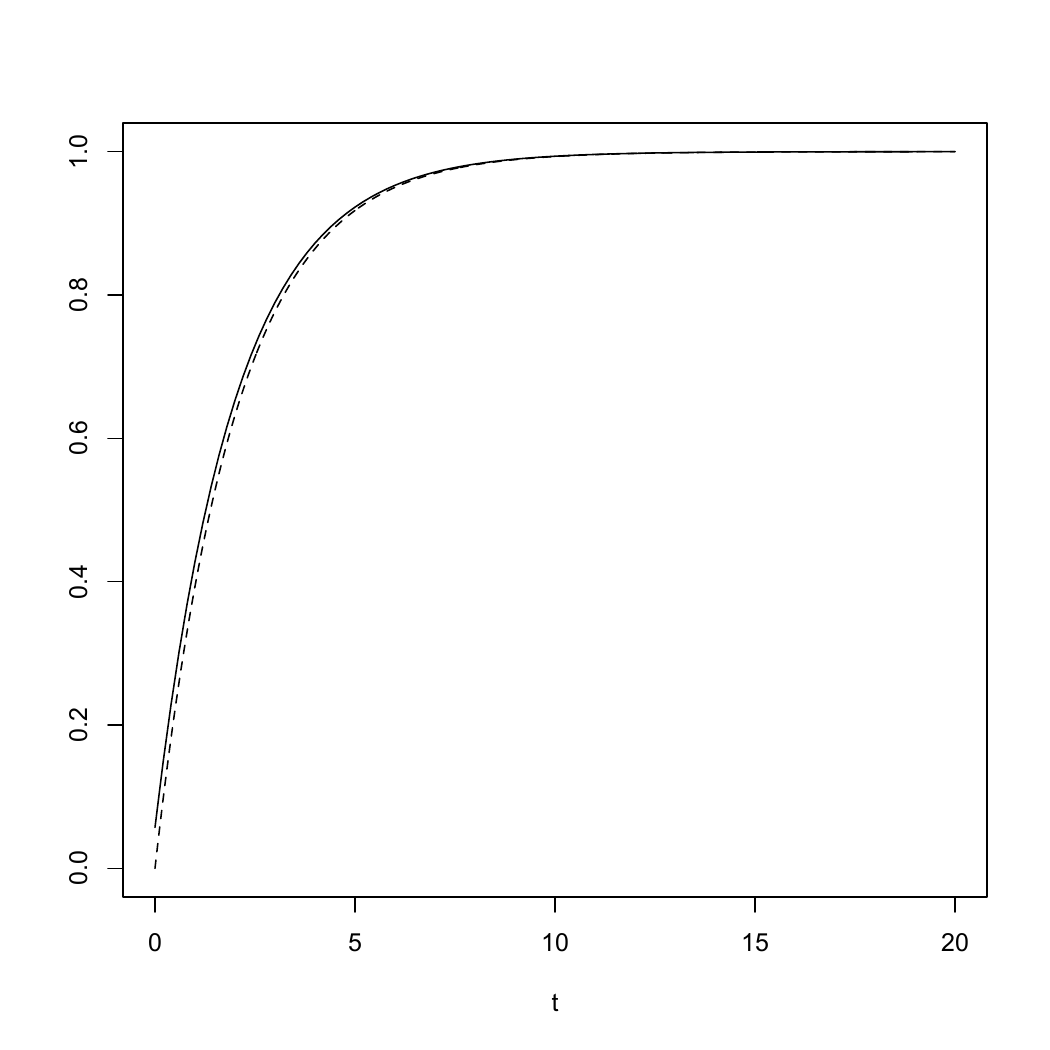}}}
\subfigure[True detection probabilities]{\scalebox{0.45}{\includegraphics{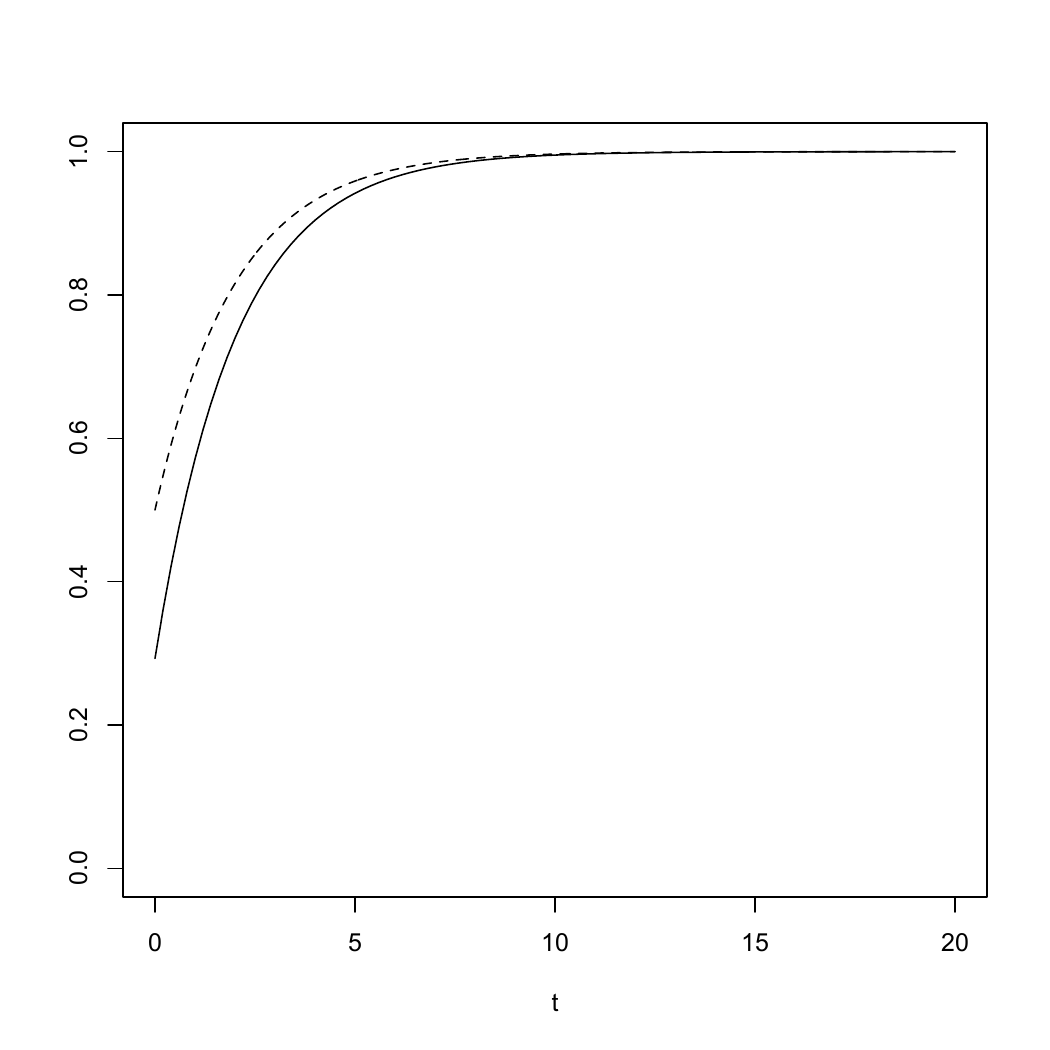}}}
\caption{Figure~1: The true and subjective detection probabilities of $\varphi^\star$ and $\varphi$ as a function of $t$ in Example~\ref{ex:discrete3}, when $p=2/3$ and $E(t)> \ln 4$ for all $t>0$. In Panels~(a) and (b), the solid line and the dashed line represent the detection probability of $\varphi^\star$ and $\varphi$, respectively.}
\label{fig:ex-discrete3}
\end{center}
\end{figure}

Panel (a) of Figure~\ref{fig:ex-discrete3} plots $P[\varphi^\star(\cdot, t)]$ (solid line) and $P[\varphi(\cdot, t)]$ (dashed line) as functions of $t$, when $p=2/3$ and $E(t)> \ln 4$; Panel~(b) does the same for $P^{\#}[\varphi^\star(\cdot, t)]$ (solid line) and $P^{\#}[\varphi(\cdot, t)]$ (dashed line).  As we can see,  while the difference between $P[\varphi^\star(\cdot, t)]$ and $P[\varphi(\cdot, t)]$ is negligible,  the same cannot be said for the difference between $P^{\#}[\varphi^\star(\cdot, t)]$ and $P^{\#}[\varphi(\cdot, t)]$. 
\end{example}

\bigskip
%%%
\begin{example}[Continuous case]

\label{ex:continuous3}
Take the same setup as in Example~\ref{ex:continuous2} with $\sigma=2$ and $W=v=1$. Then the target density function is
\[
\pi(x_1, x_2)=\frac{1}{8\pi}e^{-\frac{x_1^2+x_2^2}{8}}, \quad (x_1, x_2)\in \mathbb{R}^2,
\]
 and the uniformly optimal search plan $\varphi^\star$ is given by 
\begin{eqnarray*}
\varphi^\star((r, \theta), t) &=& \left\{
		                           \begin{array}{ll}
		                           \sqrt{t/4 \pi }-\frac{r^2}{8},& \hbox{if $r^2\leq 8\sqrt{t/4\pi}$,} \\
					0, & \hbox{if $r^2> 8\sqrt{t/4\pi}$.} 
		                          \end{array}
		                         \right.
\end{eqnarray*}
Also, 
\begin{eqnarray*}
P[\varphi^\star(\cdot, t)] &=& 1-(1+\sqrt{t/4\pi})e^{-\sqrt{t/4\pi}}, \\
P^{\#}[\varphi^\star (\cdot, t)] &=& 1-e^{-\sqrt{t/4\pi}}.
\end{eqnarray*}
Now consider another search plan:
\begin{eqnarray*}
\varphi (((x_1, x_2), t) &=&  \left\{
		                           \begin{array}{ll}
		                           \sqrt{t/ \pi }-\frac{r^2}{2},& \hbox{if $r^2\leq 2\sqrt{t/\pi}$,} \\
					0, & \hbox{if $r^2> 2\sqrt{t/\pi}$.} 
		                          \end{array}
		                         \right.
\end{eqnarray*}
Then 
\[
P^{\#}[\varphi (\cdot, t)]=1-e^{-\sqrt{t/\pi}}>1-e^{-\sqrt{t/4\pi}}=P^{\#}[\varphi^\star (\cdot, t)]. 
\]
Since $\varphi^\star$ is the uniformly optimal search plan, we have $P[\varphi^\star(\cdot, t)]>P[\varphi(\cdot, t)]$ for all $t> 0$.  Indeed, we have 
\begin{eqnarray*}
P[\varphi^\star(\cdot, t)] &=& \int_0^{2\pi}\int_0^\infty (1-e^{-\varphi((r, \theta), t)}) \frac{1}{8\pi}e^{-\frac{r^2}{8}}rdrd\theta \\
				    &=& 1-\frac{1}{4}\int_0^{\left(2\sqrt{t/\pi}\right)^{1/2}} e^{-\frac{r^2}{8}} e^{-\sqrt{t/ \pi }+\frac{r^2}{8}}rdr-\frac{1}{4}\int_{\left(2\sqrt{t/\pi}\right)^{1/2}} ^\infty e^{-\frac{r^2}{8}}rdr  \\
				    &=& 1-\frac{4}{3}e^{-\sqrt{t/\pi}/4}+\frac{1}{3}e^{-\sqrt{t/\pi}}.
\end{eqnarray*}

\begin{figure}[h]
\begin{center}
\subfigure[Subjective detection probabilities]{\scalebox{0.45}{\includegraphics{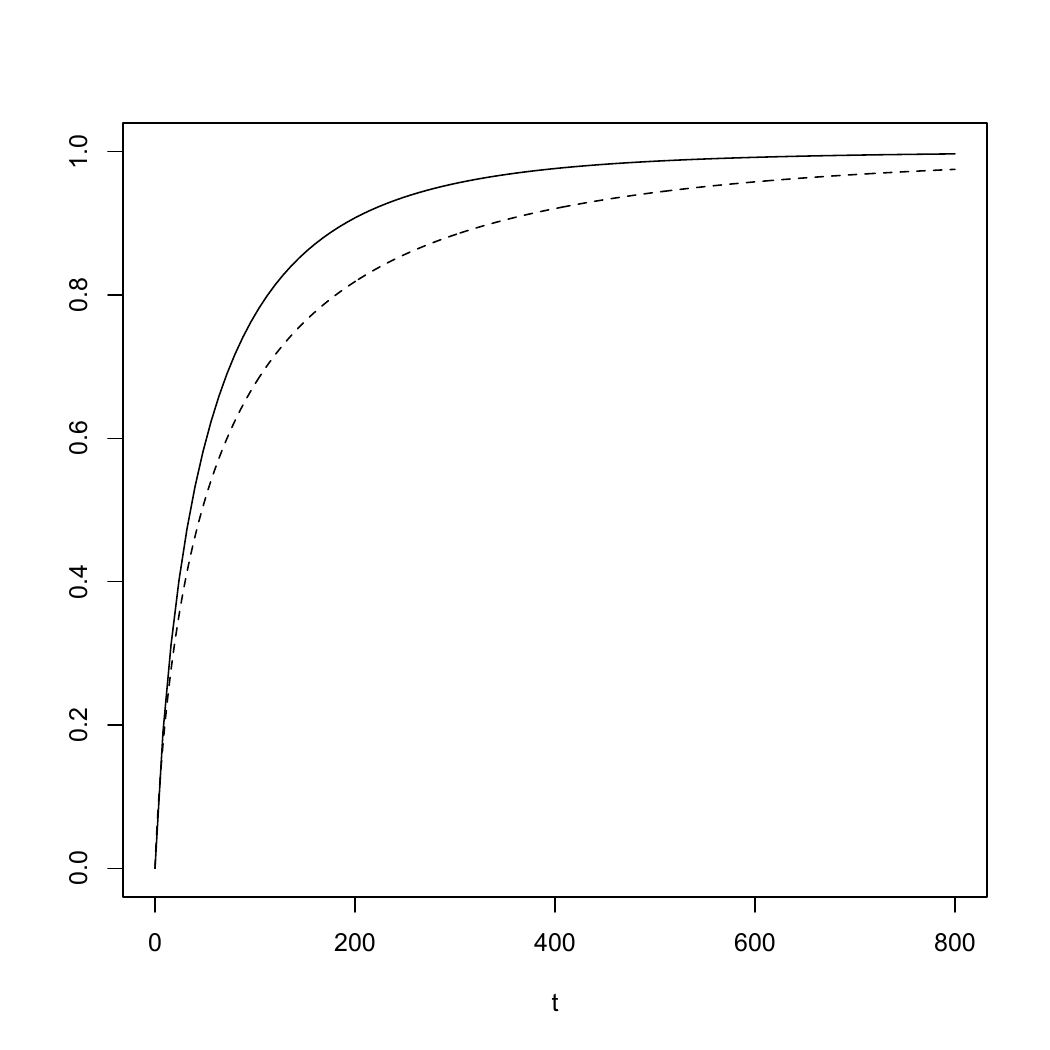}}}
\subfigure[True detection probabilities]{\scalebox{0.45}{\includegraphics{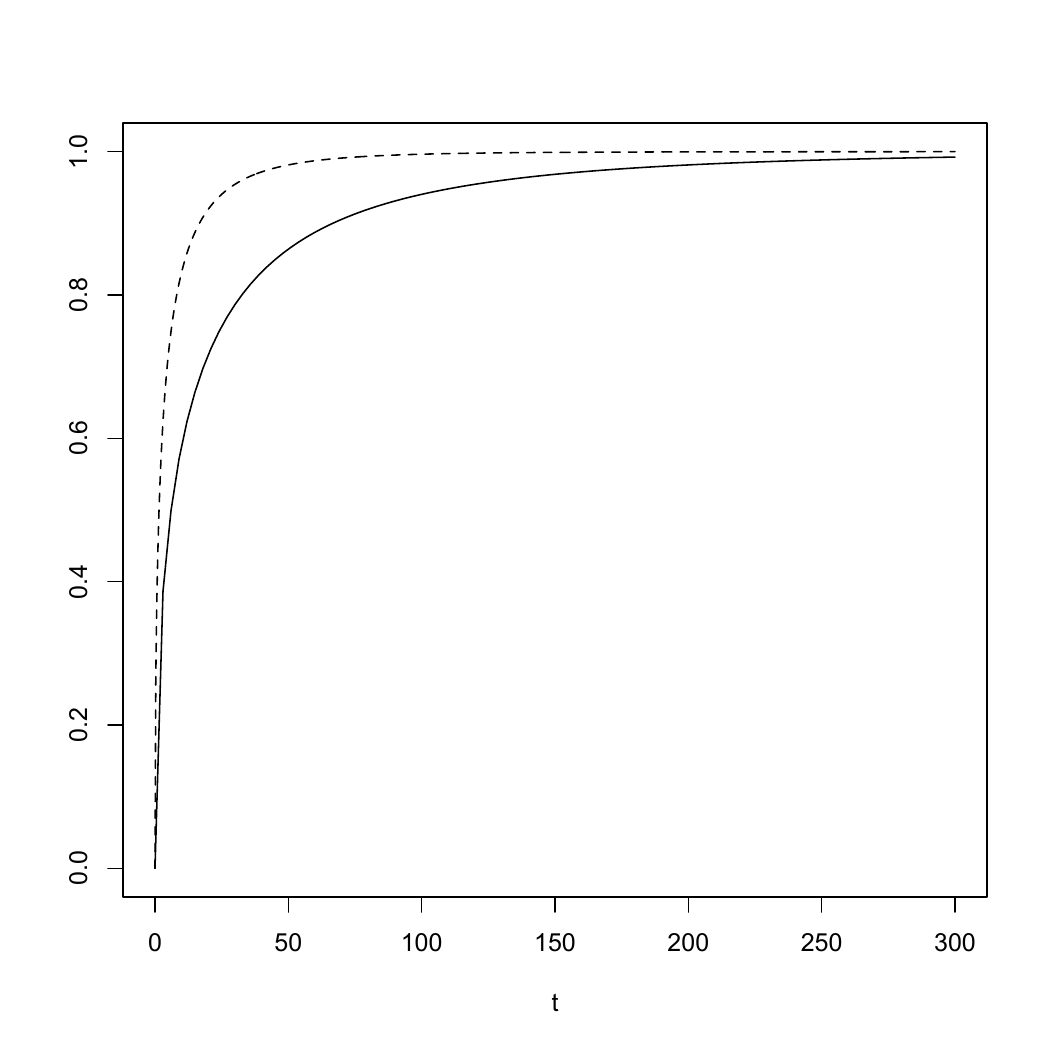}}}
\caption{Figure~2: The true and subjective detection probabilities of $\varphi^\star$ and $\varphi$ as functions of $t$ in Example~\ref{ex:continuous3}, when $\sigma=2$ and $W=v=1$.  In each panel, the solid line and the dashed line represent the detection probability of $\varphi^\star$ and $\varphi$, respectively.}
\label{fig:ex-continuous3}
\end{center}
\end{figure}

Figure~\ref{fig:ex-continuous3} provides plots  of $P[\varphi^\star(\cdot, t)]$, $P[\varphi(\cdot, t)]$, $P^{\#}[\varphi^\star(\cdot, t)]$, and $P^{\#}[\varphi^\star(\cdot, t)]$ as functions of $t$.  In both Panels~(a) and (b), the detection probabilities of $\varphi^\star$ and $\varphi$ are represented by the solid line and the dashed line, respectively.  We see that the difference between $P[\varphi^\star(\cdot, t)]$ and $P[\varphi(\cdot, t)]$ is significant, and the same can be said for the difference between $P^{\#}[\varphi^\star(\cdot, t)]$ and $P ^{\#}[\varphi(\cdot, t)]$.
\end{example}

\section{Inconsistent target distributions}

In practice, analysts often face inconsistent target distributions due to conflicting information.  This is a challenging situation because subjective detection probabilities (of different uniformly optimal search plans) based on different target distributions are not comparable.  A common approach to circumvent this challenge is as follows: first,  generate a composite target distribution based on inconsistent information (A composite target distribution is often called a hierarchical prior in statistics; see, for example,   Section~3.6  of Berger 2006.); then, obtain a uniformly optimal search plan based on the composite target distribution.  This approach has proved to be successful in several high-profile searches (e.g., Stone 1992 and Stone et al.  2014).  Apart from this approach, there is an intuitively reasonable alternative: first,  obtain a uniformly optimal search plan based on each of these inconsistent target distributions; then, create a composite search plan based on these different uniformly optimal search plans.  By definition of the uniformly optimal search plan, the first approach always outperforms the second in terms of the subjective detection probability.  However,  the next two examples show that the second approach can outstrip the first in terms of the true detection probability.

\begin{example}[Discrete case]
\label{ex:discrete4}
Suppose $X=\{1, 2\}$, $x_0=1$, $d(x, y)=1-e^{-cy}, y\geq 0$ for all $x\in X$ where $c>0$,  and there are two inconsistent target density functions $\pi_1$ and $\pi_2$: 
\begin{enumerate}
\item[]$\pi_1(1)=p_1$ and $\pi_1(2)=1-p_1$,
\item[]$\pi_2(1)=p_2$ and $\pi_2(2)=1-p_2$,
\end{enumerate}
where $0<p_2<1/2<p_1<1$.  The weights assigned to $\pi_1$  and $\pi_2$ are $w$ and $1-w$ respectively, where $1/2<w<1$.  It is assumed that $E(t)>\max\{\ln(p/(1-p)), \ln(p_1/(1-p_1)), \ln(p_2/(1-p_2))\}$, where $p=wp_1+(1-w)p_2$. 

The density function of the composite target distribution is
\[
\pi(1)=p \text{ and $\pi(2)=1-p$}.
\]
By almost the same argument as in Example~\ref{ex:discrete2}, we know  
\[
P^{\#}[\varphi^\star(\cdot, t)]=1-e^{-cE(t)/2}\left(\frac{1}{p}-1\right)^{c/2},
\]
where $\varphi^\star$ is the uniformly optimal search plan based on the composite target distribution.

For $i=1, 2$, let $\varphi_i^\star$ be the uniformly optimal search plan based on $\pi_i$.  Put 
\[
\varphi^\star_c=w\varphi^\star_1 +(1-w)\varphi^\star_2.
\]
Clearly, $\varphi_1^\star$ and $\varphi_2^\star$ both belong to $\Phi_{\Pi}(E)$.  Hence $\varphi^\star_c\in \Phi_{\Pi}(E)$.  By Example~\ref{ex:discrete2} and symmetry, we have 
\begin{eqnarray*}
\varphi_c^\star(1, t) &=& \frac{w}{2}\left[E(t)+\ln\left(\frac{p_1}{1-p_1}\right)\right]+\frac{1-w}{2}\left[E(t)-\ln\left(\frac{1-p_2}{p_2}\right)\right] \\
&=& \frac{1}{2}\left[E(t)+w\ln\left(\frac{p_1}{1-p_2}\right)-(1-w)\ln\left(\frac{1-p_2}{p_2}\right)\right].
\end{eqnarray*}
It follows that
\begin{eqnarray*}
P^{\#}[\varphi_c^\star(\cdot, t)] 
&=& 1-e^{-cE(t)/2}\left(\frac{1-p_1}{p_1}\right)^{cw/2}\left(\frac{1-p_2}{p_2}\right)^{c(1-w)/2}.
\end{eqnarray*}

Now take $c=0.3$, $p_1=0.99$, $p_2=0.17$, and $w=0.75$. Then $p=0.75$, $\max\{\ln(p/(1-p)), \ln(p_1/(1-p_1)), \ln(p_2/(1-p_2))\}=4.59512$, and
\[
P^{\#}[\varphi^\star(\cdot, t)]=1-0.82345e^{-0.3 E(t)} < 1-0.63287 e^{-0.3 E(t)}=P^{\#}[\varphi_c^\star(\cdot, t)]\quad \text{for all $t>0$.}
\]

\begin{figure}[h]
\begin{center}
\subfigure[True detection probabilities]{\scalebox{0.45}{\includegraphics{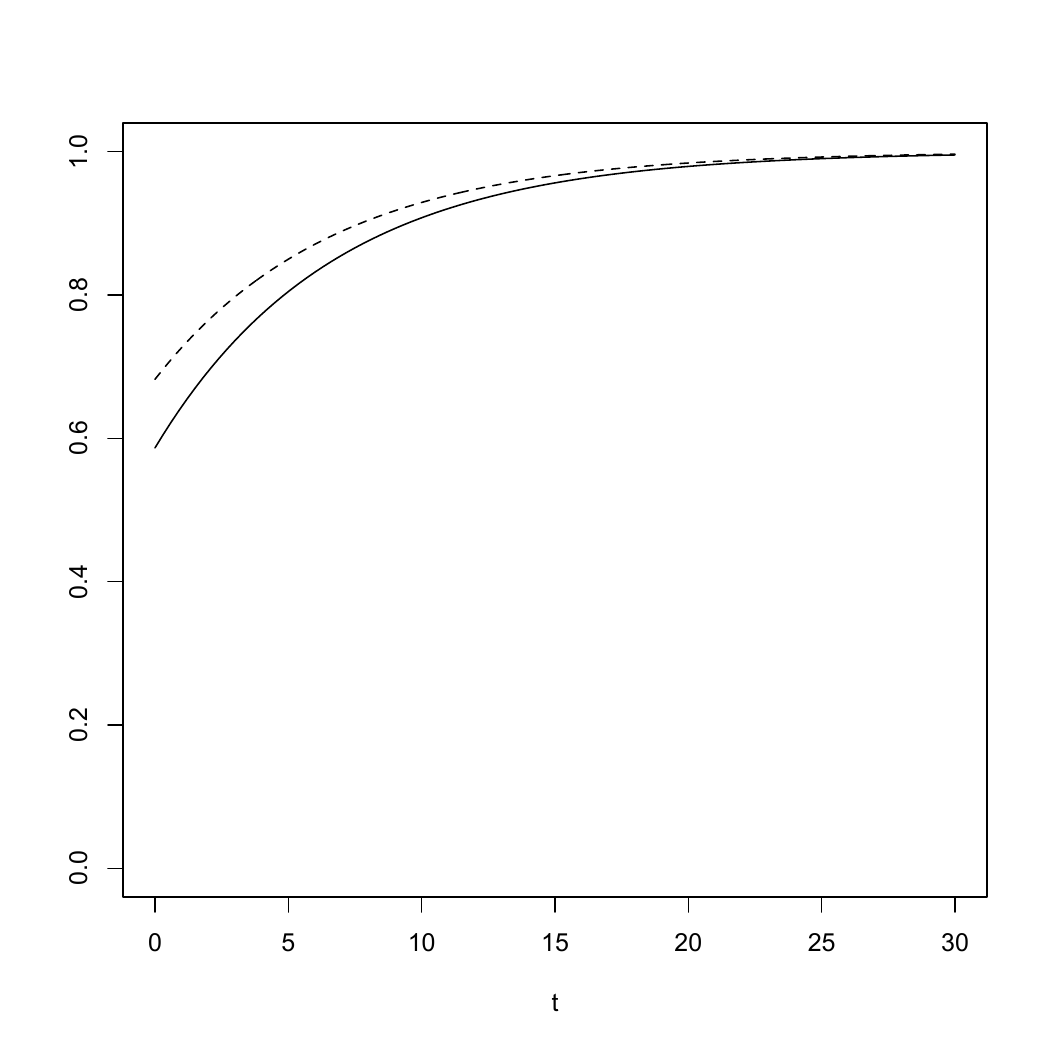}}}
\subfigure[Difference of the true detection probabilities]{\scalebox{0.45}{\includegraphics{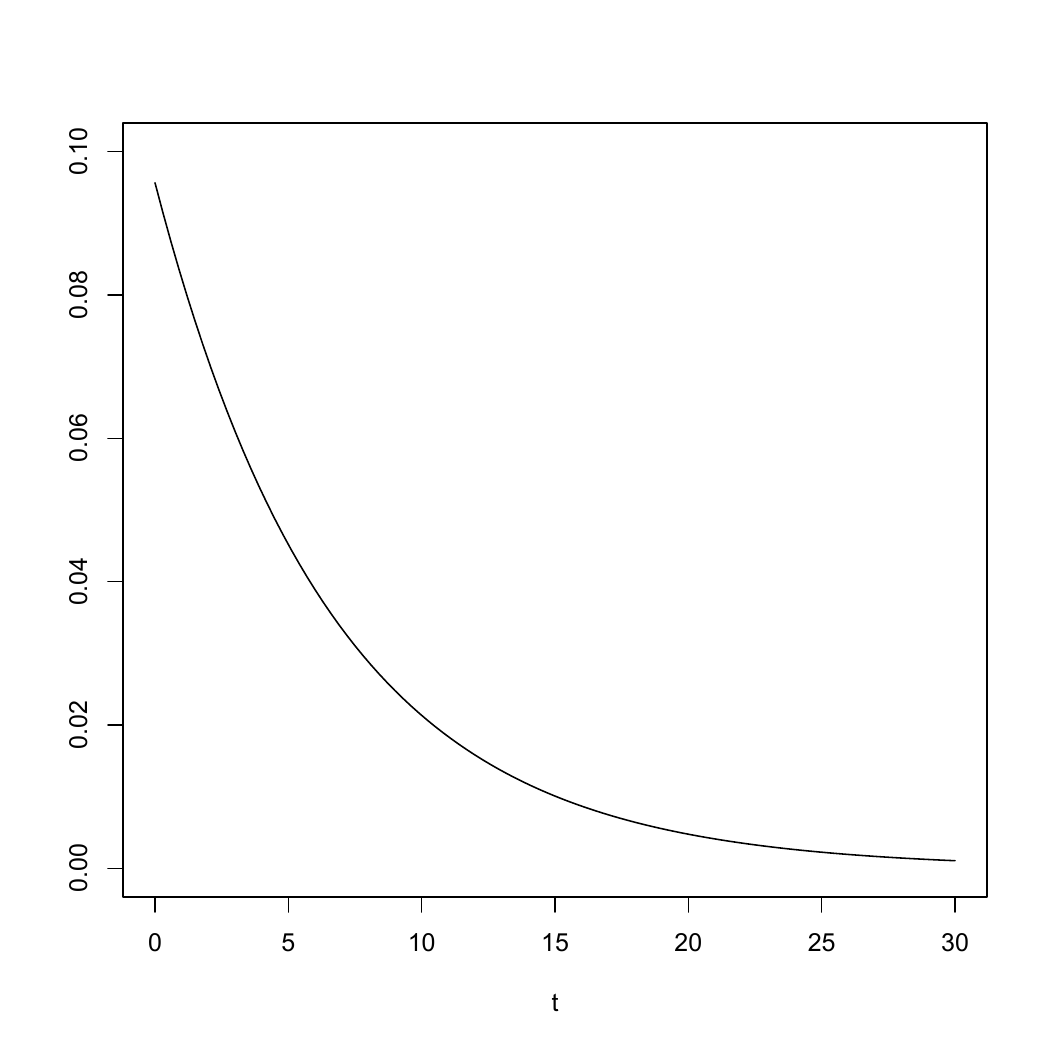}}}
\caption{Figure~3: Comparison of the true detection probabilities of $\varphi^\star$ and $\varphi^\star_c$ in Example~\ref{ex:discrete4}, when $E(t)=4.59512+t$, $c=0.3$, $p_1=0.99$, $p_2=0.17$, and $w=0.75$.   In Panel~(a), the solid line and the dashed line represent $P^{\#}[\varphi^\star(\cdot, t)]$ and $P^{\#}[\varphi_c^\star(\cdot, t)]$, respectively.  Panel~(b) plots $P^{\#}[\varphi_c^\star(\cdot, t)]-P^{\#}[\varphi^\star(\cdot, t)]$. }
\label{fig:ex-discrete4}
\end{center}
\end{figure}

Figure~\ref{fig:ex-discrete4} provides plots of $P^{\#}[\varphi^\star(\cdot, t)]$ and  $P^{\#}[\varphi^\star_c(\cdot, t)]$ over the time interval $[0, 30]$ when $E(t)=4.59512+t$, $c=0.3$, $p_1=0.99$, $p_2=0.17$, and $w=0.75$.  The difference between $P^{\#}[\varphi^\star(\cdot, t)]$ and $P^{\#}[\varphi^\star_c(\cdot, t)]$ is not negligible at the beginning, but it becomes smaller as $t$ increases.

\end{example}

%\newpage

\begin{example}[Continuous case]
\label{ex:continuous4}
Consider the same setup as in Example~\ref{ex:continuous2} except that there are two inconsistent target density functions $\pi_1$ and $\pi_2$: 
\begin{eqnarray*}
\pi_1(x_1, x_2) &=& \frac{1}{2\pi\sigma^2_1}e^{-\frac{x_1^2+x_2^2}{2\sigma_1^2}}, \quad (x_1, x_2)\in X= \mathbb{R}^2, \\
\pi_1(x_1, x_2) &=& \frac{1}{2\pi\sigma^2_2}e^{-\frac{x_1^2+x_2^2}{2\sigma_2^2}}, \quad (x_1, x_2)\in X= \mathbb{R}^2,
\end{eqnarray*}
where $\sigma_1, \sigma_2>0$. To create a composite target distribution, we attach weights $w$ and $1-w$ to $\pi_1$ and $\pi_2$ respectively.  By the closure property of the normal distribution,  the density function of the composite target distribution equals
\[
\pi (x_1, x_2) =\frac{1}{2\pi\sigma^2}e^{-\frac{x_1^2+x_2^2}{2\sigma^2}}, \quad (x_1, x_2)\in X= \mathbb{R}^2,
\]
where $\sigma^2=w\sigma_1^2+(1-w)\sigma_2^2$.  We know from Example~\ref{ex:continuous2} that
\[
P^{\#}[\varphi^\star(\cdot, t)]=1-e^{-\sqrt{\frac{E(t)}{\pi \sigma^2}}},
\]
where $\varphi^\star$ is the uniformly optimal search plan based on $\pi$.

Let $\varphi_i^\star$ denote the  uniformly optimal search plan based on  $\pi_i$ for $i=1, 2$.  Then the composite search plan is 
\[
\varphi^\star_c=w\varphi^\star_1 +(1-w)\varphi^\star_2.
\]
Since $x_0=(0, 0)$, we know from Example~\ref{ex:continuous2} that 
\[
\varphi^\star_c(x_0, t)=w \sqrt{\frac{E(t)}{\pi \sigma_1^2}}+(1-w) \sqrt{\frac{E(t)}{\pi \sigma_2^2}}=\left(\frac{w}{\sigma_1}+\frac{1-w}{\sigma_2}\right)\sqrt{\frac{E(t)}{\pi}}.
\]
Hence,  
\[
P^{\#}[\varphi^\star_c(\cdot, t)]=1-e^{-\left(\frac{w}{\sigma_1}+\frac{1-w}{\sigma_2}\right)\sqrt{\frac{E(t)}{\pi}}}.
\]
Now take $\sigma_1=2$, $\sigma_2=0.5$, and $w=0.5$. Then $\sigma=1.4577$ and 
\[
P^{\#}[\varphi^\star(\cdot, t)]=1-e^{-0.686\sqrt{\frac{E(t)}{\pi}}}<1-e^{-1.25\sqrt{\frac{E(t)}{\pi}}}=P^{\#}[\varphi^\star_c(\cdot, t)] \quad \text{for all $t>0$.}
\]

\begin{figure}[h]
\begin{center}
\subfigure[True detection probabilities]{\scalebox{0.45}{\includegraphics{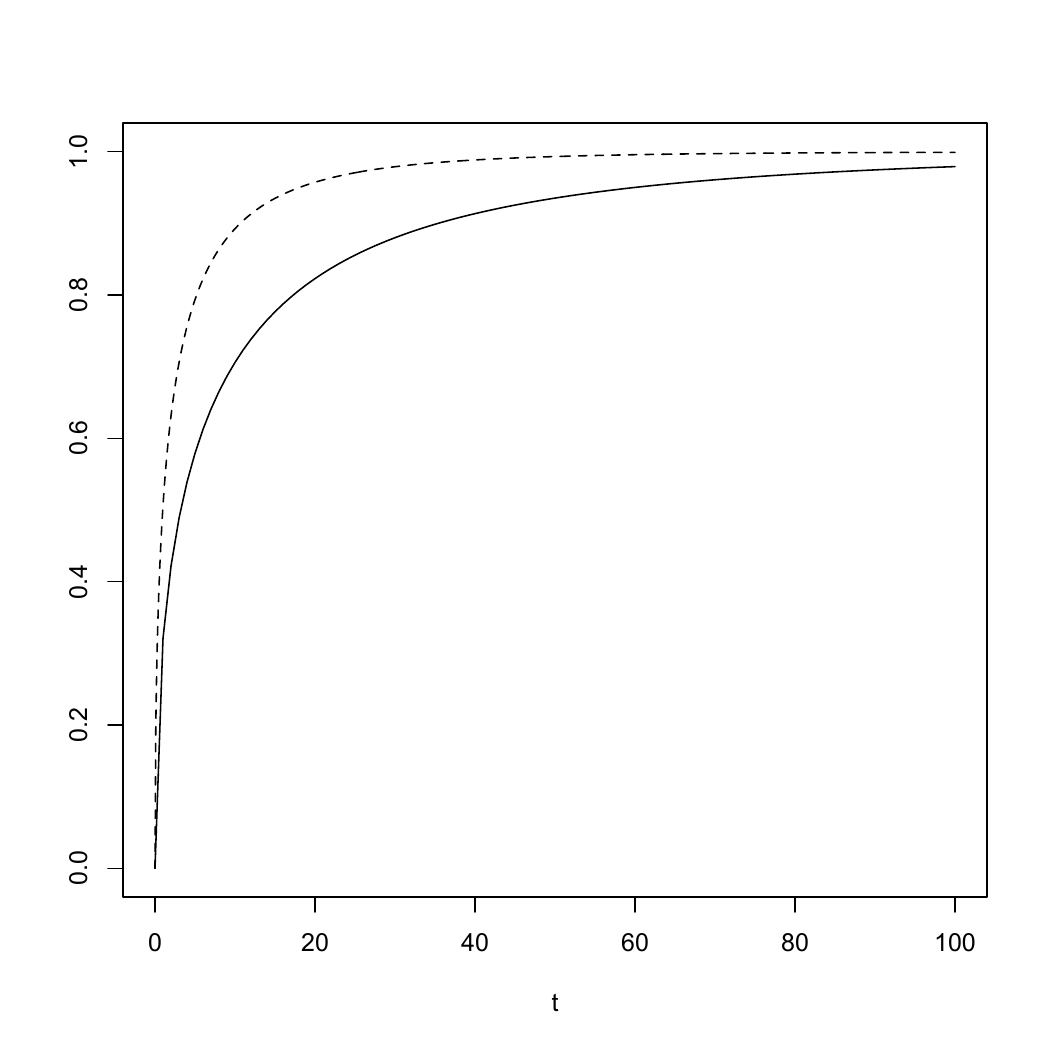}}}
\subfigure[Difference of the true detection probabilities]{\scalebox{0.45}{\includegraphics{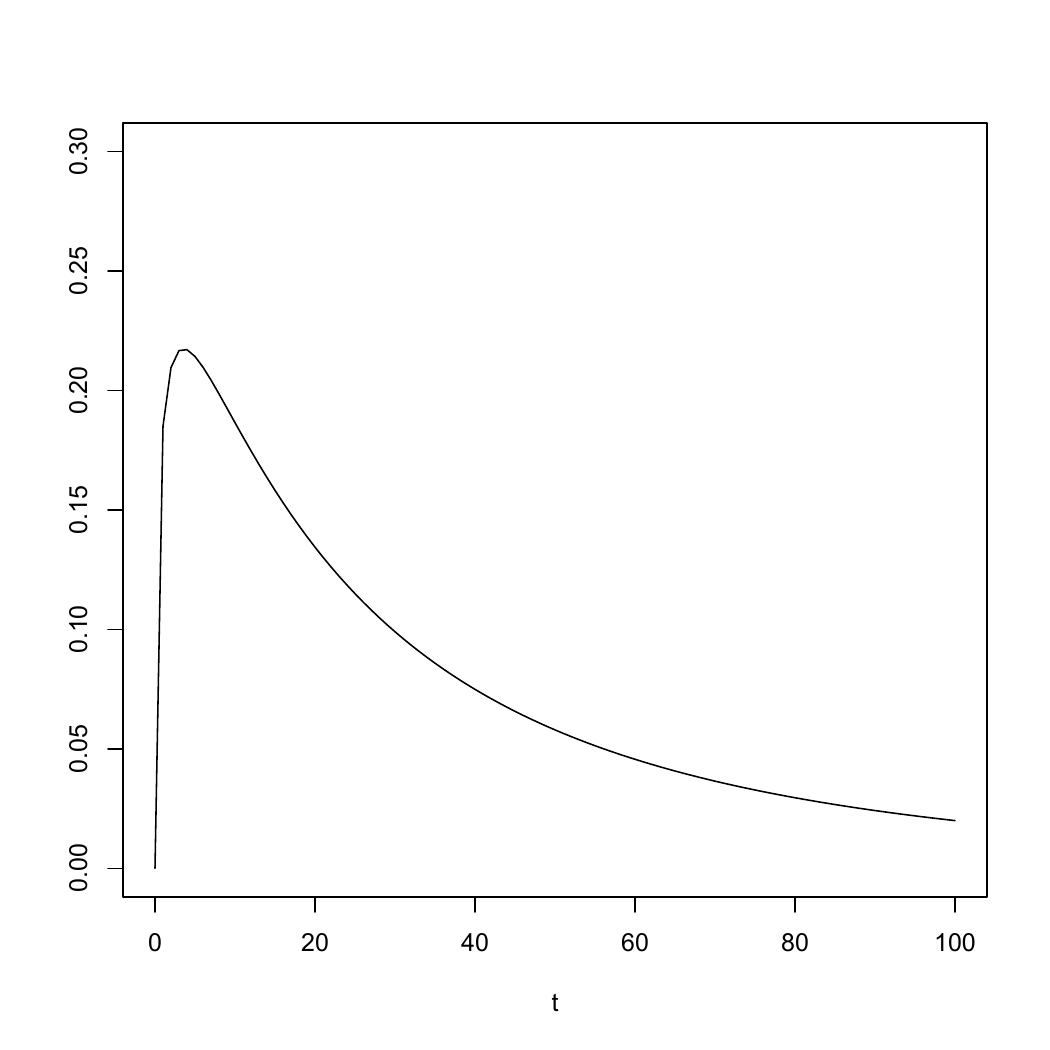}}}
\caption{Figure~4: Comparison of the true detection probabilities of $\varphi^\star$ and $\varphi^\star_c$ in Example~\ref{ex:continuous4}, when $E(t)=t$, $\sigma_1=2$, $\sigma_2=0.5$, and $w=0.5$.   In Panel~(a), the solid line and the dashed line represent $P^{\#}[\varphi^\star(\cdot, t)]$ and $P^{\#}[\varphi_c^\star(\cdot, t)]$ respectively.  Panel~(b) plots $P^{\#}[\varphi_c^\star(\cdot, t)]-P^{\#}[\varphi^\star(\cdot, t)]$. }
\label{fig:ex-continuous4}
\end{center}
\end{figure}

Figure~\ref{fig:ex-continuous4} compares $P^{\#}[\varphi^\star(\cdot, t)]$ and  $P^{\#}[\varphi^\star_c(\cdot, t)]$ when  $E(t)=t$, $\sigma_1=2$, $\sigma_2=0.5$, and $w=0.5$.   Clearly, the difference between $P^{\#}[\varphi^\star(\cdot, t)]$ and $P^{\#}[\varphi^\star_c(\cdot, t)]$ is significant. Also, the difference between $P^{\#}[\varphi^\star(\cdot, t)]$ and $P^{\#}[\varphi^\star_c(\cdot, t)]$  first increases, then decrease, and eventually approaches $0$.

\end{example}

\section{Unsolvability of an open problem}
The previous two sections demonstrate some limitations of the uniformly optimal search plan.  To overcome these limitations,  we need to find a search plan that maximizes the true detection probability at each moment of search. That is, to find a search plan $\psi^\star$ such that
\begin{equation}
\label{eq:optimal-true}
P^{\#}[\psi^\star(\cdot, t)]=\max_{\varphi\in \Phi(E)}P^{\#}[\varphi(\cdot, t)] \quad \text{for all $t\geq 0$.}
\end{equation}
This problem was first proposed by Hong (2025).  A solution to this problem would be an analyst's wildest dream. Unfortunately,  this problem is unsolvable.  To see this,  consider the following search plan 
\[
\psi(x, t)=\left\{
		                           \begin{array}{ll}
		                           E(t),& \hbox{if $x=x_0$,} \\
					0,  & \hbox{otherwise.} 
		                          \end{array}
		                         \right.
\]
Then
\[
P^{\#}[\psi(\cdot, t)]=\max_{\varphi\in \Phi(E)}P^{\#}[\varphi(\cdot, t)] \quad \text{for all $t\geq 0$.}
\]
Conversely, it is easy to see that any solution $\psi^\star$ to (\ref{eq:optimal-true}) must equal $\psi$.  Since $\psi$ puts all available effort $E(t)$ on the true target location at every moment $t>0$,  its construction requires us to know the true target location.  But we are uncertain about the true target location in the search problem.  Therefore,  this problem is unsolvable. 

What if we impose an additional restriction: the search plan must not put all effort on a single location? If we could obtain an objective distribution for the target location by repeating the experiment of how the target is lost infinitely often.  Then we could derive a search plan by using the formula for the uniformly optimal search plan, with the target distribution being replaced by this objective distribution. This search plan would be the solution to  (\ref{eq:optimal-true}).  However,  in any realistic case, such as a lost submarine or aircraft,  the same incident will not recur under identical conditions.  Hence,  this problem remains unsolvable.

\section{A limiting property of the uniformly optimal search plan}
Though the true and subjective detection probability of a uniformly optimal search plan can differ significantly,  we will see that they both converge to $1$ as $t$ goes to $\infty$ as far as the target distribution is not completely wrong.  The practical implication of this result is that the uniformly optimal search plan will be approximately optimal in terms of the true detection probability for a prolonged search.  In fact, Hong (2024) establishes that if $\varphi^\star$ is a uniformly optimal search plan, then $\lim_{t\rightarrow \infty} P[\varphi^\star(\cdot, t)]=1$.  The next theorem shows that the same can be said for the true detection probability $P^{\#}[\varphi^\star(\cdot, t)]$.  

\begin{theorem}
If the detection function $d$ is regular and $\lim_{y\rightarrow \infty} d(x, y)=1$ for all $x\in X$, the target distribution is continuous with a probability density function $\pi$,  $x_0$ belongs to the support of the target distribution, and $\lim_{t\rightarrow \infty}E(t)=\infty$, then $\lim_{t\rightarrow \infty} P^{\#}[\varphi^\star(\cdot, t)]=1$. 
\end{theorem}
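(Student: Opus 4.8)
The plan is to reduce the theorem to a single scalar statement about the effort placed at the true location, namely $\varphi^{\star}(x_0,t)\to\infty$ as $t\to\infty$. By \eqref{eq:trueprob} we have $P^{\#}[\varphi^{\star}(\cdot,t)]=d(x_0,\varphi^{\star}(x_0,t))$, so since $\lim_{y\to\infty}d(x_0,y)=1$ by hypothesis, it suffices to show that the allocation at $x_0$ grows without bound. (Note that the subjective analogue $\lim_t P[\varphi^{\star}(\cdot,t)]=1$ of Hong (2024) does not transfer directly, as Examples~\ref{ex:discrete2}--\ref{ex:continuous3} show the two probabilities can differ; a direct argument is needed.) Throughout I use the specific semi-closed-form plan of Theorem~\ref{thm:unifopt}, $\varphi^{\star}(x,t)=q_x^{-1}(Q^{-1}(E(t)))$ with $q_x(y)=\pi(x)\,d'(x,y)$ where $d'(x,y):=\partial d(x,y)/\partial y$; here ``$x_0$ belongs to the support'' is read as $\pi(x_0)>0$, so that $q_{x_0}$ inherits from regularity the property of being continuous, strictly decreasing, and strictly positive on $[0,\infty)$.

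First I would record the decisive consequence of the hypothesis $\lim_{y\to\infty}d(x,y)=1$: since $d(x,\cdot)$ is regular with $d(x,0)=0$, we have $\int_0^{\infty}d'(x,y)\,dy=1$, and as $d'(x,\cdot)$ is decreasing this forces $\lim_{y\to\infty}d'(x,y)=0$. Consequently, for every $x$ with $\pi(x)>0$ the map $q_x$ is a continuous strictly decreasing bijection of $[0,\infty)$ onto $(0,\pi(x)d'(x,0)]$; hence $q_x^{-1}(\lambda)$ is finite and positive for $0<\lambda\le\pi(x)d'(x,0)$ and, crucially, $q_x^{-1}(\lambda)\to\infty$ as $\lambda\downarrow 0$. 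For $x$ with $\pi(x)=0$ one has $q_x\equiv 0$ and $q_x^{-1}(\lambda)=0$ for all $\lambda>0$.

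Next I would show that the ``Lagrange multiplier'' $\lambda_t:=Q^{-1}(E(t))$ tends to $0$. Because each $q_x^{-1}(\cdot)$ is non-increasing, $q_x^{-1}(\lambda)$ increases as $\lambda\downarrow 0$, so monotone convergence gives $\lim_{\lambda\downarrow 0}Q(\lambda)=\int_X\lim_{\lambda\downarrow 0}q_x^{-1}(\lambda)\,dx=+\infty$, the integrand being $+\infty$ on $\{x\in X:\pi(x)>0\}$, a set of positive Lebesgue measure since $\int_X\pi=1$. As $Q$ is continuous and strictly decreasing where positive (Theorem~\ref{thm:unifopt}), $Q^{-1}$ is well defined and decreasing on $[0,\infty)$ with $\lim_{K\to\infty}Q^{-1}(K)=0$; since $\lim_{t\to\infty}E(t)=\infty$, we get $\lambda_t\to 0$. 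Combining with the previous step: given $M>0$, set $\eps=q_{x_0}(M)>0$; then $\lambda_t<\eps$ for all large $t$, whence $\varphi^{\star}(x_0,t)=q_{x_0}^{-1}(\lambda_t)>q_{x_0}^{-1}(q_{x_0}(M))=M$. Thus $\varphi^{\star}(x_0,t)\to\infty$ and $P^{\#}[\varphi^{\star}(\cdot,t)]=d(x_0,\varphi^{\star}(x_0,t))\to 1$.

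The main obstacle is the third step: verifying that $Q(\lambda)$ genuinely blows up as $\lambda\downarrow 0$, which is what makes $Q^{-1}(E(t))\to 0$. This hinges on the integrability identity $\int_0^{\infty}d'(x,y)\,dy=1$ (to pin down $\lim_y d'(x,y)=0$, so that $q_x^{-1}(\lambda)\to\infty$) together with the measure-theoretic fact that $\pi>0$ on a set of positive Lebesgue measure, plus a clean application of monotone convergence. A secondary point I would flag explicitly at the outset is that the pointwise quantity $\varphi^{\star}(x_0,t)$, and hence $P^{\#}$, is only determined once one fixes the canonical version of Theorem~\ref{thm:unifopt} and assumes $\pi(x_0)>0$; without $\pi(x_0)>0$ the allocation at $x_0$ is arbitrary in the continuous case and the statement fails.
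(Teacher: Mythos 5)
Your proposal is correct and follows essentially the same route as the paper's proof: express $P^{\#}[\varphi^\star(\cdot,t)]=d(x_0,q_{x_0}^{-1}(Q^{-1}(E(t))))$ via the semi-closed form of Theorem~\ref{thm:unifopt}, show the multiplier $Q^{-1}(E(t))\to 0$, and conclude $\varphi^\star(x_0,t)\to\infty$ so that $d(x_0,\cdot)\to 1$. The only difference is that you derive from first principles (integrability of $d'$, monotone convergence, positivity of $\pi$ on a set of positive measure) the facts $\lim_{\lambda\downarrow 0}q_{x_0}^{-1}(\lambda)=\infty$ and $\lim_{K\to\infty}Q^{-1}(K)=0$, which the paper takes from Stone (1975) and leaves largely implicit, and you rightly flag that "$x_0$ in the support" must be read as $\pi(x_0)>0$ for the pointwise argument to go through.
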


\begin{proof}
By Theorem~\ref{thm:unifexistence}, the uniformly optimal search plan $\varphi^\star$ exists in this case.   Since $x_0$ belongs to the support of the target distribution, the true detection probability is 
\[
P^{\#}[\varphi^\star(\cdot, t)]=d(x_0, \varphi^\star(x_0, t))=d(x_0, q_{x_0}^{-1}(Q^{-1}(E(t)))).
\]
Since $Q$ is continuous on $(0, \infty)$ and strictly decreasing on the interval $(0, \sup\{\lambda\mid Q(\lambda)>0\})$, $Q^{-1}$ is continuous an strictly decreasing on $(0, q_x(0)]$.  Also,  we know $q_x^{-1}$ is decreasing on $(0, \infty)$ and $\lim_{\lambda \rightarrow 0} q_x^{-1}(\lambda)=\infty$ for all $x\in X$ such that $\pi(x)>0$ (e.g.,  Page~47 of Stone 1975). Thus, $\lim_{K\rightarrow \infty}q_x^{-1}(Q^{-1}(K))=\infty$.  It follows from the assumption $\lim_{t\rightarrow \infty}E(t)=\infty$ that 
\[
\lim_{t \rightarrow \infty}P^{\#}[\varphi^\star(\cdot, t)]=d\left(x_0, q_{x_0}^{-1}\left(Q^{-1}\left(\lim_{t \rightarrow \infty}E(t)\right)\right)\right)=1.
\]
\end{proof}
\noindent \textbf{Remarks.} First,  it is easy to see that this theorem also holds for the discrete case by the same argument. Secondly,  the assumption $\lim_{y\rightarrow \infty} d(x, y)=1$ for all $x\in X$ is not automatically satisfied by a regular detection function. To see this, consider $d(x, y)=c(1-e^{-y})$ for all $x\in X$ and $y\geq 0$ where $0<c<1$.  Finally,  the true detection probability of an arbitrary search plan in $\Phi(E)$ does not necessarily go to  $1$ as $t\rightarrow \infty$. To see this, consider the case where $X=\mathbb{R}^2$,  $x_0=(0, 0)$, the detection function is $d(x, y)=1-e^{-y}$ for all $x\in X$ and $t\geq 0$,  and a search plan $\varphi$ given by (in polar coordinates)
\[
\varphi((r, \theta), t)=\left\{
		                           \begin{array}{ll}
		                           e^{-t},& \hbox{for $0<r\leq R(t)$,} \\
					\frac{[Wvt-I(t)]}{\pi[\widetilde{R}^2(t)-R^2(t)]},  & \hbox{$R(t)<r\leq \widetilde{R}(t)$,} \\
					0,  & \hbox{otherwise,} 
		                          \end{array}
		                         \right.
\]
where the search is conducted at speed $v$ using sensor with a sweep width $W$, and $R^2(t)=2\sigma^2H\sqrt{t}$, $H=\sqrt{Wv/\pi \sigma^2}$,  $\widetilde{R}(t)=\sqrt{R^2(t)+(Wvt-I(t))/\pi}$,  and $I(t)=\int_0^{2\pi}\int_0^{R(t)} e^{-t}rdrd\theta$. Then we have
\[
\lim_{t\rightarrow \infty} P^{\#}[\varphi(\cdot, t)]=\lim_{t\rightarrow \infty} \left(1-e^{-e^{-t}}\right)=0.
\]

\section{Concluding remarks}

We established that the uniformly optimal search plan is not optimal in terms of the true detection probability. This fact should not be taken as a drawback of the uniformly optimal search plan.  It simply reveals the challenging nature of the search problem.  When practitioners face inconsistent target distributions, they can obtain different uniformly optimal search plans based on these conflicting target distributions.  We have shown that, in terms of the true detection probability, the uniformly optimal search plan based on the composite target distribution may be inferior to the composite search plan based on different uniformly optimal search plans.  These two facts prompted us to seek a solution to the following problem: to find a search plan that maximizes the true detection probability at each moment of search.  Unfortunately,  this problem is unsolvable according to our investigation.  Moreover,  we established that the true detection probability of the uniformly optimal search plan approaches one when the search time goes to infinity.

%\section*{ACKNOWLEDGMENTS}
\section*{Acknowledgments}
I thank Jeffrey R. Cares,  Matthew Cosner, and Michael W. Kopp for a fruitful discussion during the 92nd MORS Symposium at Naval Postgraduate School. This discussion inspired several parts of this article.
%I thank the Editor-in-Chief,  the Associate Editor, and two anonymous reviewers for many useful comments and suggestions.  

\section*{Conflict of interest}
The author has no conflict of interest to declare.

%\section*{REFERENCES}

\section*{References}
\begin{description}

\item{} Alpern, S., Chleboun, P., Katslkas, S.~and Lin, K.Y.~(2021).  Adversarial patrolling in a uniform. \emph{Operations Research}~70(1), 129-140.

\item{} Arkin, V.I.~(1964). Uniformly optimal strategies in search problems. \emph{Theory of Probability and its Applications}~9(4),  647--677. 

\item{} Berger, J.O.~(2006). \emph{Statistical Decision Theory and Bayesian Analysis}, Second Edition, Springer: New York.

\item{}Bourque, FA.~(2019).  Solving the moving target search problem using indistinguishable searchers. \emph{European Journal of Operational Research}~275, 45--52.

\item{} Clarkson, J., Glazebrook, K.D.~and Lin, K.Y.~(2020). Fast or slow: search in discrete locations with two search models. \emph{Operations Research}~68(2), 552--571.

%\item{} Everett, H.~(1963). Generalized Lagrange multiplier method for solving probglems of optimum allocation of resources. \emph{Operations Research}~11, 399--417.

\item{} Hong, L.~(2024).  On several properties of uniformly optimal search plans. \emph{Military Operations Research}~29(2), 95---106. 

\item{} Hong, L.~(2025).   On the true detection probability of the uniformly optimal search plan.  \emph{Journal of the Operational Research Society}, to appear, \url{https://arxiv.org/abs/2311.18226}.

%\item{} Kadane, J.B.~(2015). Optimal discrete search with technological choice.  \emph{Mathematical Methods of Operations Research}~81, 317--336. 

\item{} Koopman, B.O.~(1946). Search an screening. \emph{Operations Evaluation Group Report No. 56 (unclassified).} Center for Naval Analysis, Rosslyn, Virginia. 

\item{} Koopman, B.O.~(1956a). The theory of search, I. Kinematic bases. \emph{Operations Research}~4, 324--346.

\item{} Koopman, B.O.~(1956b). The theory of search, II. Target detection. \emph{Operations Research}~4, 503--531.

\item{} Koopman, B.O.~(1956a). The theory of search, III The optimum distribution of searching efforts. \emph{Operations Research}~4, 613--626. 

\item{} Kratzke, T.M., Stone, L.D., and Frost J.R.~(2010). Search and rescue optimal planning system.   \emph{Proceedings of the 13th International Conference on Information Fusion, Edinburgh, UK, July 2010}, 26--29.

\item{} Lidbetter, T.~(2020). Search and rescue in the face of uncertain threats. \emph{European Journal of Operations Research}~285, 1153--1160.

\item{} Lin, K.Y.~(2021). Optimal patrol of a perimeter. \emph{Operations Research}~70(5), 2860--2866.

\item{} Richardson, H.R.~and Stone, L.D.~(1971). Operations analysis during the underwater search for Scorpion. \emph{Naval Research Logistic Quarterly}~18, 141--157. 

\item{} Richardson, H.R~and Discenza, J.H.~(1980). The United States Coast Guard Computer-assisted Search Planning System (CASP). \emph{Naval Research Logistic Quarterly}~27,  659--680. 

%\item{} Soza Co. Ltd and U.S. Coast Guard~(1996).  \emph{The Theory of Search: A Simplified Explanation}.  U.S. Coast Guard: Washington, D.C..

\item{} Stone, L.D.~(1973). Totally optimality of incrementally optimal allocations. \emph{Naval Research Logistics Quarterly}~20, 419--430.

\item{} Stone, L.D.~(1975). \emph{Theory of Optimal Search}.  Academic Press: New York.

\item{} Stone, L.D.~(1976). Incremental and total optimization of separable functionals with constraints. \emph{SIAm Journal on Control and Optimization}~14,  791--802.

\item{} Stone, L.D., Royset, J.O., and Washburn, A.R.~(2016). \emph{Optimal Search for Moving Targets}. Springer: New York.

\item{} Stone, L.D.~and Stanshine J.A.~(1971). Optimal searching using uninterrupted contact investigation.  \emph{SIAM Journal of Applied Mathematics}~20, 241--263.

\item{} Stone, L.D.~(1992). Search for the SS Central America: mathematical treasure hunting. \emph{Interfaces}~22: 32--54. 

\item{} Stone, L.D., Keller, C.M. , Kratzke, t.M.~and Strumpfer, J.P.~(2014). Search for the wreckage of Air France AF 447. \emph{Statistical Science}~29:69--80. 

%\item{} Stone, L.D.,  Streit, R. L.,  Corwin, T.L.~and Bell, K.L.~(2014b). \emph{Bayesian Multiple Target Tracking}, Second Edition. 
Artech House: Boston,

%\item{} Wagner, D. H., Mylander, W.C. , and Sanders, t.J.~(1999). \emph{Naval Operations Analysis}.  Naval Institute Press: Annapolis, M.A.. 

\item{} Vermeulen, J.F.~and Brink, M.V.~(2017).   The search for an altered moving target.  \emph{Journal of the Operational Research Society}~56(5), 514--525.

\item{} Washburn, A.~(2014). \emph{Search and Detection}, 5th Edition. Create Space: North Carolina.

%\item{} Washburn, A.~and Kress, M.~(2009). \emph{Combat Modeling}.  Springer: New York. 

\end{description}

\end{document}